\newcommand{\x}{\mbox{$\mathbf x$}}
\newcommand{\y}{\mbox{$\mathbf y$}}
\newtheorem{theorem}{Theorem}
\newtheorem{corollary}{Corollary}
\newtheorem{definition}{Definition}
\newtheorem{example}{Example}
\newtheorem{lemma}{Lemma}
\newtheorem{proposition}{Proposition}
\newenvironment{proof}[1][Proof]{\noindent\textbf{#1.} }{\ \rule{0.5em}{0.5em}}
\newcommand{\Prob}{{\mathbb{P}}}
\newcommand{\E}{{\mathbb{E}}}
\begin{document}
\title{A Multilevel Approach towards Unbiased Sampling of Random Elliptic Partial Differential Equations}
\author{Xiaoou Li and Jingchen Liu \footnote{This research is supported in part by NSF SES-1323977 and Army Research Office W911NF-12-R-0012.}}

\maketitle
\baselineskip 18pt
\begin{abstract}
Partial differential equation is a powerful tool to characterize various physics systems. In practice, measurement errors are often present and probability models are employed to account for such uncertainties. In this paper, we present a Monte Carlo scheme that yields unbiased estimators for expectations of random elliptic partial differential equations. This algorithm combines multilevel Monte Carlo \cite{giles2008multilevel} and a randomization scheme proposed by \cite{rhee2012new,rheeunbiased}. Furthermore, to obtain an estimator with both finite variance and finite expected computational cost, we employ higher order approximations.
\end{abstract}

\section{Introduction}

Elliptic partial differential equation is a classic equation that are employed to describe various static physics systems. In practical life, such systems are usually not described precisely. 
For instance, imprecision could be due to microscopic heterogeneity or measurement errors of parameters. 
To account for this, we introduce uncertainty to the system by letting certain coefficients contain randomness. To be precise, let $U\subset R^d$ be a simply connected domain. We consider the following differential equation concerning $u: U\to R$
\begin{equation}\label{eq}
-\nabla\cdot (a(x)\nabla u(x))=f(x) \mbox{ for } x\in U,
\end{equation}
where  $f(x)$ is a real-valued function and $a(x)$ is a strictly positive function. 
Just to clarify the notation, $\nabla u(x)$ is the gradient of $u(x)$ and ``$\nabla\cdot$" is the divergence of a vector field.
For each $a$ and $f$, one solves $u$ subject to certain boundary conditions that are necessary for the uniqueness of the solution. 
This will be discussed in the sequel. 
The randomness is introduced to the system through $a(x)$ and $f(x)$. Thus, the solution $u$ as an implicit functional of $a$ and $f$ is a real-valued stochastic process living on $U$.
Throughout this paper, we consider  $d\leq 3$ that is sufficient for most physics applications.

Of interest is the distributional characteristics of $\{u(x):x\in U\}$. The solution is typically not in an analytic form of $a$ and $f$ and thus closed form characterizations are often infeasible.
In this paper, we study the distribution of $u$ via Monte Carlo. Let $C(U)$ be the set of continuous functions on $U$.
For a real-valued functional 
$$\mathcal Q : C(U) \to R$$ satisfying certain regularity conditions, we are interested in computing
$$w_{\mathcal Q} = \E\{\mathcal Q(u)\}.$$
Such problems appear  often in  the studies of physics systems; see, for instance, \cite{de2005dealing,delhomme1979spatial}.

The contribution of this paper is the development of an unbiased Monte Carlo estimator of $w_{\mathcal Q}$ with finite variance. Furthermore, the expected computational cost of generating one such estimator is finite.  
The analysis strategy is a combination of multilevel Monte Carlo and a randomization scheme. 
Multilevel Monte Carlo is a recent advance in simulation and approximation of continuous processes \cite{giles2008multilevel,cliffe2011multilevel,graham2011quasi}.
The randomization scheme is developed by \cite{rhee2012new,rheeunbiased}.
Under the current setting, a direct application of these two methods leads to either an estimator with infinite variance or infinite expected computational cost. 
This is mostly due to the fact that the accuracy of regular numerical methods of the  partial differential equations is insufficient. More precisely, the mean squared error of a discretized Monte Carlo estimator is  proportional to the square of mesh size \cite{charrier2013finite,teckentrup2013further}.
The technical contribution of this paper is to employ quadratic approximation to solve PDE under certain smoothness conditions of $a(x)$ and $f(x)$ and to perform careful analysis of the numerical solver for equation \eqref{eq}.

\paragraph{Physics applications.}
Equation \eqref{eq} has been widely used in many disciplines to describe time-independent physical problems.  The well-known Poisson equation or Laplace equation is a special case when $a(x)$ is a constant. 
In different disciplines, the solution $u(x)$ and the coefficients $a(x)$ and $f(x)$ have their specific physics meanings. 
When the elliptic PDE is used to describe the steady-state distribution of heat (as temperature), $u(x)$ carries the meaning of temperature at $x$ and the coefficient $a(x)$ is the heat conductivity.  
In the study of electrostatics, $u$ is the potential (or voltage) induced by electronic charges, $\nabla u$ is the electric field, and $a(x)$ is the permittivity (or resistance) of the medium.  
In groundwater hydraulics, the meaning of $u(x)$ is the hydraulic head (water level elevation) and $a(x)$ is the hydraulic conductivity (or permeability).  The physics laws for the above three different problems to derive the same type of elliptic PDE are called Fourier's law, Gauss's law, and Darcy's law, respectively.  
In classical continuum mechanics, equation \eqref{eq} is known as the generalized Hook's law where $u$ describes the material deformation under the external force $f$. The coefficient $a(x)$ is known as the elasticity tensor.

	In this paper, we consider that both $a(x)$ and $f(x)$ possibly contain randomness. We elaborate its physics interpretation in the context of material deformation application. 
	In the model of classical continuum mechanics, the domain $U$ is a smooth manifold denoting the physical location of the piece of material.
The displacement $u(x)$ depends on the external force $f(x)$, boundary conditions, and the elasticity tensor $\{a(x): x\in U\}$.
The elasticity coefficient $a(x)$ is modeled  as a spatially varying random field to characterize the inherent heterogeneity and uncertainties in the  physical properties of the material (such as the modulus of elasticity, c.f.~\cite{sobczyk2001stochastic,ostoja2007microstructural}).
	For example, metals, which lend themselves most readily to the analysis by means of the classical elasticity theory, are actually polycrystals, i.e., aggregates of an immense number of anisotropic crystals randomly oriented in space. Soils, rocks, concretes, and ceramics provide further examples of materials with very complicated structures.
	Thus, incorporating  randomness in $a(x)$ is necessary to take into account of the heterogeneities and the uncertainties under many situations.
	Furthermore, there may also be uncertainty contained in the external force $f(x)$.

The rest of the paper is organized as follows. In Section~\ref{SecMLMC}, we present the problem settings and some preliminary materials for the main results. Section~\ref{SecMain} presents the construction of the unbiased Monte Carlo estimator for $w_{\mathcal Q}$  and rigorous complexity analysis. Numerical implementations are included in Section~\ref{Secsim}. Technical proofs are included in the appendix.

\section{Preliminary analysis}\label{SecMLMC}

Throughout this paper, we consider equation \eqref{eq} living on a bounded domain $U\subset R^d$ with twice differentiable boundary denoted by $\partial U$.
To ensure the uniqueness of the solution, we consider the Dirichlet boundary condition 
\begin{equation}\label{dirichlet}
u(x)=0,\quad \mbox{for $x\in$ $\partial U$.}
\end{equation}
We let both exogenous functions $f(x)$ and $a(x)$ be random processes, that is, 
$$f(x,\omega) : U \times \Omega \to R \quad \mbox{and} \quad a(x,\omega) : U \times \Omega \to R$$ 
where $(\Omega, \mathcal{F}, \Prob)$ is a probability space. 
To simplify notation, we omit the second argument and  write $a(x)$ and $f(x)$.
As an implicit  function of the input processes $a(x)$ and $f(x)$,  the solution $u(x)$ is also a stochastic process  living on $U$.
We are interested in computing the distribution of $u(x)$ via Monte Carlo.
In particular, for some functional 
$$\mathcal Q : C(\bar U) \to R$$
satisfying certain regularity conditions that will be specified in the sequel, we compute the expectation
\begin{equation}
w_{\mathcal Q} = \E[\mathcal Q(u)]
\end{equation}
by Monte Carlo. The notation $\bar U$ is the closure of domain $U$ and $C(\bar U)$ is the set of real-valued continuous functions on $\bar U$.

Let $\hat Z$ be an estimator (possibly biased) of $\E\mathcal Q(u)$. The mean square error (MSE)
\begin{equation}\label{MSE}
\E(\hat Z-w_{\mathcal Q})^2= Var(\hat Z) +\{\E(\hat Z)-w_{\mathcal Q}\}^2.
\end{equation}
consists of a bias term and a variance term.
For the Monte Carlo estimator in this paper, the bias  is removed via a randomization scheme combined with multilevel Monte Carlo. To start with, we present the basics  of multilevel Monte Carlo and the randomization scheme.

\subsection{Multilevel Monte Carlo}\label{secMLMC}

Consider a biased estimator of $w_{\mathcal Q}$ denote by $Z_n$. 
In the current context, $Z_n$ is the estimator corresponding to some numerical solution  based on certain discretization scheme, for instance, $Z_n = \mathcal Q(u_n)$ where $u_n$ is the solution of the finite element method.
The subscript $n$ is a generic index of the discretization size. The detailed construction of $Z_n$ will be provided in the sequel.
As $n \to \infty$, the estimator becomes unbiased, that is,
$$\E(Z_n ) \to w_{\mathcal Q}.$$
Multilevel Monte Carlo is based on the following telescope sum 
\begin{equation}\label{sum}
w_{\mathcal Q} = \E(Z_0) + \sum_{i=0}^\infty \E(Z_{i+1} -Z_i).
\end{equation}
One may choose $Z_0$ to be some simple constant. Without loss of generality, we choose  $Z_0\equiv 0$ and thus the first term vanishes. The advantage of writing $w_{\mathcal Q}$ as the telescope sum is that one is often able to construct $Z_{i}$ and $Z_{i+1}$ carefully such that they are appropriately coupled and the variance of $Y_i = Z_{i+1} -Z_i$ decreases fast as $i$ tends infinity.
Let 
\begin{equation}\label{delta}
\Delta_i = \E(Z_{i+1} -Z_i)
\end{equation}
be estimated by 
$$\hat \Delta_i = \frac{1}{n_i} \sum_{j=1}^{n_i} Y_i^{(j)}$$
where $Y_i^{(j)}$, $j=1,...,n_i$ are independent replicates of $Y_i$.
The multilevel Monte Carlo estimator is 
\begin{equation}\label{mlmc}
\hat Z = \sum_{i=1}^I \hat \Delta _i
\end{equation}
where $I$ is a large integer truncating the infinite sum  \eqref{sum}.

\subsection{An unbiased estimator via a randomization scheme}\label{secunbiasedef}

In the construction of the multilevel Monte Carlo estimator \eqref{mlmc}, the truncation level $I$ is always finite and therefore the estimator is always biased.
	In what follows, we present an estimator with the bias removed. It is constructed based on the telescope sum of the multilevel Monte Carlo estimator and a randomization scheme that is originally proposed by \cite{rhee2012new,rheeunbiased}.

	Let $N$ be a positive-integer-valued random variable that is independent of $\{Z_i\}_{i=1,2,\dots}$. Let  $p_n=\Prob(N=n)$ be the probability mass function of $N$ such that $p_n>0$ for all $n>0$. The following identity holds trivially
 \begin{equation*}
 w_{\mathcal Q}=\sum_{i=1}^{\infty}\E(Z_{n}-Z_{n-1})=\sum_{n=1}^{\infty} \frac{\E[Z_{n}-Z_{n-1};N=n]}{p_n}=\E\Big(\frac{Z_{N}-Z_{N-1}}{p_N}\Big).
 \end{equation*}
 Therefore, an unbiased estimator of $w_{\mathcal Q}$ is given by
  \begin{equation}\label{unbiasedest}
  \tilde{Z}=\frac{Z_{N}-Z_{N-1}}{p_N}.
  \end{equation}
 Let $\tilde{Z}_{i},i=1,...,M$ be independent copies of $\tilde{Z}$. The averaged estimator
   \begin{equation*}
   \tilde {Z}_M=\frac{1}{M}\sum_{i=1}^M \tilde{Z}_i
   \end{equation*}
   is unbiased for $w_{\mathcal Q}$ with variance $Var(\tilde{Z})/M$ if finite. 
   

We provide a complexity analysis of the estimator $\tilde Z$. This consists of the  calculation of the variance of  $\tilde Z$ and of the computational cost to generate $\tilde Z$.
We start with the second moment 
\begin{equation}\label{secondmomentsum}
\E(\tilde Z^2 ) = \E\Big[  \frac{(Z_N - Z_{N-1})^2}{p_N^2}\Big ] = \sum _{n=1}^\infty \frac{\E(Z_n - Z_{n-1})^2}{p_n}.
\end{equation}
In order to have finite second moment, it is almost necessary to choose the random variable $N$ such that 
\begin{equation}\label{secondmoment}
p_n > n \E(Z_n - Z_{n-1})^2 \mbox{ for all $n$ sufficiently large.} 
\end{equation}
Furthermore, $p_n$ must also satisfy the natural constraint that 
$$\sum_{n=1}^\infty p_n =1,$$
which suggests $p_n<n^{-1}$ for sufficiently large $n$.
Combining with \eqref{secondmoment}, we have
\begin{equation}\label{constraint}
n^{-1}>p_n>n \E(Z_n-Z_{n-1})^2
\end{equation}
Notice that we have not yet specified a discretization method, thus \eqref{constraint} can typically be met by appropriately indexing the mesh size. For instance, in the context of solving PDE numerically, one may choose the mesh size converging to $0$ at a super exponential rate with $n$ (such as $e^{-n^2}$) and thus $\E(Z_n - Z_{n-1})^2$ decreases sufficiently fast that allows quite some flexibility in choosing $p_n$. Thus, constraint \eqref{constraint} alone can always be satisfied and it is not intrinsic to the problem. It is the combination with the following constraint that forms the key issue.

We now compute the expected computational cost for generating $\tilde Z$. Let $c_n$ be the computational cost for generating $Z_n - Z_{n-1}$. Then, the expected cost is
\begin{eqnarray}\label{expectedcost}
C = \sum_{i=1}^n p_n c_n.
\end{eqnarray}
In order to have $C$ finite, it is almost necessary that 
\begin{equation}\label{cost}
p_n < n^{-1} c_n^{-1}.
\end{equation}
Based on the above calculation, if the estimator $\tilde Z$ has a finite variance and a finite expected computation time, then $p_n$ must satisfy both \eqref{constraint} and \eqref{cost}, which suggests
\begin{equation}\label{intrinsic}
 \E(Z_n - Z_{n-1})^2< n^{-2} c_n^{-1}.
\end{equation}
That is, one must be able to construct a coupling between $Z_n$ and $Z_{n-1}$ such that \eqref{intrinsic} is in place.
In Section \ref{SecMain}, we provide detailed complexity analysis for the random elliptic PDE illustrating the challenges and presenting the solution.

\subsection{Function spaces and norms}\label{secnotation}
In this section, we present a list of notation that will be frequently used in later discussion.
Let $U\subset R^d$ be a bounded open set. We define the following spaces of functions.
\begin{eqnarray*}
C^k(\bar U)&=&\{u: \bar U\to R|u \mbox{ is $k$-time continuously differentiable} \}\\
L^p(U)&=&\{u:U\to R| \int_U |u(x)|^p dx<\infty\}\\
L^p_{loc}(U)&=&\{u:U\to R| u\in L^p(K) \mbox{ for any compact subset $K\subset U$}\}\\
C^{\infty}_c(U)&=&\{u:U\to R| u \mbox{ is infinitely differentiable with a compact support that is a subset of $U$}\}.
\end{eqnarray*}
\begin{definition}
For $u,w\in L^1_{loc}(U)$ and a multiple index $\alpha$,  we say $w$ is the $\alpha$-weak derivative of $u$, and write
$$
D^{\alpha}u=w
$$ if
$$
\int_U u D^{\alpha}\phi dx=(-1)^{|\alpha|}\int_U w\phi dx \mbox{ for all } \phi\in C^{\infty}_c(U),
$$
where $D^{\alpha}\phi$ in the above expression denote the usual $\alpha$-partial derivative of $\phi$. 
\end{definition}
If $u\in C^{k}(\bar U)$ and  $|\alpha|\leq k$, then the $\alpha$-weak derivative and the usual partial derivative are the same. Therefore, we can write $D^{\alpha}\phi$ for both continuously differentiable and weakly differentiable functions without ambiguous.

We further define norms $\|\cdot\|_{C^k(\bar U)}$ and $\|\cdot\|_{L^p(U)} $ on $C^k(\bar U)$ and $L^p(U)$ respectively as follows.
\begin{equation}\label{defcknorm}
\|u\|_{C^k{(\bar U})}= \sup_{|\alpha|\leq k, x\in \bar{U}} |D^{\alpha} u(x)|,
\end{equation}
and
\begin{equation}\label{deflpnorm}
\|u\|_{L^p(U)}=\Big(\int_U |u|^p dx\Big)^{1/p}.
\end{equation}
We proceed to the definition of Sobolev space $H^k(U)$ and $H^k_{loc}(U)$
\begin{equation}\label{defhk}
H^k(U)=\{u:U\to R| D^{\alpha}u\in L^2(U) \mbox{ for all multiple index $\alpha$ such that } |\alpha|\leq k \},
\end{equation}
and 
$$
H^{k}_{loc}(U)=\{u: U\to R|~ u|_V\in H^{k}(V) \mbox{ for all } V\subsetneq U  \}
$$
For $u\in H^k(U)$, the norm $\|u\|_{H^k(U)}$ and semi-norm $|u|_{H^k(U)}$ are defined as 
\begin{equation}\label{defhknorm}
\|u\|_{H^k(U)}=\Big(\sum_{|\alpha|\leq k}\|D^{\alpha}u\|^2_{L^2(U)}\Big)^{1/2}
,
\end{equation}
and
\begin{equation}\label{defhkseminorm}
|u|_{H^k(U)}=\Big(\sum_{|\alpha|= k}\|D^{\alpha}u\|^2_{L^2(U)}\Big)^{1/2}
.
\end{equation}
We define the space $H^1_0(U)$ as
\begin{equation}\label{defh01}
H^1_0(U)=\{u\in H^1(U): u(x)=0 \mbox{ for } x \in \partial U\}.
\end{equation}
On the space $H^1_0(U)$ the norm $\|\cdot\|_{H^1(U)}$ and the semi-norm $|\cdot|_{H^1(U)}$ are equivalent.

\subsection{Finite element method for partial differential equation}

We briefly describe the finite element method for partial differential equations.
The weak solution $u\in H^1_0(U)$ to \eqref{eq} under the Dirichlet boundary condition \eqref{dirichlet} is defined through the following variational form
\begin{equation}\label{weakPDE}
b(u,v)=L(v)\mbox{ for all } v\in H^1_0(U),
\end{equation}
where we define the  bilinear and linear forms 
$$b(u,v)=\int_U a(x)\nabla u(x)\cdot\nabla v(x)dx\quad  \mbox{ and } \quad L(v)=\int_U f(x)v(x)dx,$$
and ``$\cdot$" is the vector inner product. When the coefficients $a$ and $f$ are sufficiently smooth, say, infinitely differentiable, the weak solution $u$ becomes a strong solution.
The key step of the finite element method is to approximate the infinite dimensional space $H^1_0(U)$ by some finite dimensional linear space $V_n=\mathrm{span}\{\phi_1,...,\phi_{L_n}\}$, where  $L_n$ is the dimension of $V_n$. The approximate solution $u_n\in V_n$ is defined through the set of equations 
\begin{equation}\label{femequation}
b(u_n,v)=L(v) \mbox{ for all } v\in V_n.
\end{equation}
Both sides of the above equations are linear in $v$. Then, \eqref{femequation} is equivalent to
\begin{equation*}
b(u_n,\phi_i)=L(\phi_i) \mbox{ for } i=1,...,L_n.
\end{equation*}
We further write $u_n=\sum_{i=1}^{L_n} d_i \phi_i$ as a linear combination of the basis functions. Then, \eqref{femequation} is equivalent to solving linear equations
\begin{equation}\label{linearequation}
\sum_{j=1}^{L_n} d_j b(\phi_j,\phi_i)=L(\phi_i) \mbox{ for } i=1,...,L_n.
\end{equation}
The basis functions $\phi_1,...,\phi_{L_n}$ are often chosen such that \eqref{linearequation} is a sparse linear system. Solving a sparse linear system requires a computational cost of order $O(L_n\log(L_n))$ as $L_n\to\infty.$

\section{Main results}\label{SecMain}
	In this section, we present the  construction of  $\tilde Z$ and its complexity analysis. We use finite element method to solve the PDE numerically and then  construct $Z_n$. 
	To illustrate the challenge, we start with the complexity analysis of $\tilde Z$ based on usual finite element method with linear basis functions, with which we show that \eqref{constraint} and \eqref{cost} cannot be satisfied simultaneously.
	Thus, $\tilde Z$ either has infinite variance or has infinite expected computational cost.
	We improve upon this by means of quadratic approximation under smoothness assumptions on  $a$ and $f$. The estimator $\tilde Z$  thus can be  generated in constant time and has a finite variance.

\subsection{Error analysis of finite element method}\label{seclinear}


\paragraph{Piecewise linear basis functions.}

A popular choice of  $V_n$ is the space of piecewise linear functions defined on a triangularization $\mathcal{T}_{n}$ of $U$. 
In particular, $\mathcal{T}_{n}$ is a partition of $U$ that is each element of $\mathcal{T}_n$ is a triangle partitioning $U$. The maximum edge length of triangles is proportional to $2^{-n}$ and  $V_n$ is the space of all the piecewise linear functions over $\mathcal{T}_n$ that vanish on the boundary $\partial U$. The dimension  of $\mathcal{T}_n$ is $L_n=O(2^{dn})$.
Detailed construction of $\mathcal T _n$ and piecewise linear basis functions is provided in Appendix \ref{sectri} and Example \ref{extr} therein.

	Once a set of basis functions has been chosen, the coefficients $d_i$'s are solved according to the linear equations \eqref{linearequation} and the numerical solution is given by 
	$$u_n(x)=\sum_{i=1}^{L_n} d_i \phi_i(x).$$
	For each functional $\mathcal Q$, the biased estimator is 
	\begin{equation*}
	Z_n = \mathcal Q (u_n).
	\end{equation*}
	It is important to notice that, for different $n$, $u_n$ are computed based on the same realizations of $a$ and $f$. Thus, $Z_n $ and $Z_{n-1}$ are coupled.
	
	We now proceed to verifying \eqref{intrinsic} for linear basis functions.  The dimension of $V_n$ is of order $L_n=O(2^{dn})$ where $d=dim(U)$.
	We consider the case when $\mathcal{Q}$ is a functional that involves weak derivatives of $u$. For instance, $\mathcal{Q}$ could be in the form  $q(|\cdot|_{H^1(U)})$ for some smooth function $q$  and $Z=\mathcal{Q}(u)$, where $|\cdot|_{H^1(U)}$ is defined as in \eqref{defhkseminorm}.

 According to Proposition 4.2 of \cite{charrier2013finite}, 
 under the conditions that $\E[\frac{1}{\min_{x\in U} a^p(x)}]<\infty$, $\E(\|a\|^p_{C^1(\bar U)})<\infty$, and $\E(\|f\|^p_{L^2(U)})<\infty$ for all $p>0$,  $\E(Z_n-Z_{n-1})^2=O(2^{-2n})$ if $u_{n}$ and $u_{{n-1}}$ are computed using the same sample of $a$ and $f$. The condition \eqref{intrinsic} becomes
$$n 2^{-2(n-1)}<n^{-1}2^{-dn}|\log 2^{-nd}|^{-1}.$$
A simple calculation yields that the above inequality holds only if $d=1$.
Therefore, \emph{it is impossible to pick  $p_n$ such that the estimator $\tilde Z$  has a finite variance and a finite expected computational cost using the finite element method with linear basis functions if $d\geq 2$. }
The one-dimensional case is not of great interest given that $u$ can be solved explicitly.
To establish \eqref{intrinsic} for higher dimensions, we need a faster convergence rate of the PDE numerical solver.

\paragraph{Quadratic basis functions.}

We improve accuracy of the finite element method by means of piecewise polynomial basis functions under smoothness conditions on $a(x)$ and $f(x)$. Classical results (e.g. \cite{knabner2003numerical}) show that finite element method with polynomial basis functions provides more accurate results than that with piecewise linear basis functions. 
We obtain similar results for random coefficients. 
Define the minimum and maximum of  $a(x)$ as
\begin{equation*}
a_{\min}=\min_{x\in\bar{U}} a(x)\mbox{ and }  a_{\max}=\max_{x\in\bar{U}} a(x).
\end{equation*}
We make the following assumptions on the random coefficients $a(x)$ and $f(x)$.
\begin{itemize}
\item [A1.] $a_{\min}> 0$ almost surely and $\E(1/a^p_{\min})<\infty$, for all $p\in (0,\infty)$.
\item[A2.] $a$ is almost surely continuously twice differentiable  and $\E(\|a\|^p_{C^2(\bar U)})<\infty$ for all $p \in (0,\infty)$.
\item[A3.] $f\in H^1(U)$ almost surely and $\E(\|f\|^p_{H^1(U)}) <\infty$ for all $p\in (0,\infty)$.
\item[A4.] There exist non-negative constants $p'$ and $\kappa_q$ such that for all $w_1,w_2\in H^{1}_0(U)$,  $$|\mathcal{Q}(w_1)-\mathcal{Q}(w_2)|\leq \kappa_q\max\{\|w_1\|^{p'}_{H^1(U)},\|w_2\|^{p'}_{H^1}\}\|w_1-w_2\|_{H^1(U)}.$$
\end{itemize}
With the assumptions A1-A4, we are able to construct an unbiased estimator for $w_{\mathcal Q}=\E[\mathcal Q(u)]$ with both finite variance and finite expected computational time.


Let $k$ be a positive interger and $\mathcal{T}_n$ be a regular triangularization of the domain $U$ with mesh size $\sup_{K\in\mathcal{T}_n} diam(K)=O(2^{-n})$, whose detailed definition is provided in Appendix~\ref{sectri} and let $V^{(k)}_n$ be the set of piecewise continuous polynomials on $\mathcal{T}_n$ that have degrees no more than $k$ and vanish on the boundary of $U$.
To be more specific, $V^{(k)}_n$ is defined as follows
\begin{eqnarray*}
V^{(k)}_n=\Big\{v\in C(\bar{U}): v|_{K} \mbox{ is a polynomial with degree no more than $k$,}\\  \mbox{ for each } K\in \mathcal{T}_n  \mbox{ and }v|_{\bar{U}\setminus{D_n}}=0\Big\},
\end{eqnarray*}
where $D_n=int(\cup_{K\in \mathcal{T}_n,K\subset \bar U}K)$ and $int(A)$ denotes the interior of the set $A.$
An approximate solution $u_n^{(k)}$ is obtained by solving \eqref{femequation} with $V_n=V^{(k)}_n$, that is,
\begin{equation}\label{eqfem}
 u^{(k)}_n\in V^{(k)}_h \mbox{ such that }
b(u^{(k)}_n,v)=L(v),\mbox{ for all } v \in V^{(k)}_n.
\end{equation}
In what follows, we present a bound of the convergence rate of $\|u^{(k)}_n-u\|_{H^1(U)}$, where $u$ is the solution to \eqref{weakPDE} and $u_n^{(k)}$ is the solution to \eqref{eqfem}. 
We start with the existence and the uniqueness of the solution.
 Notice that $a(x)$  is bounded below by positive random variables $a_{\min}$ and above by $a_{\max}$. According to Lax-Milgram Lemma, \eqref{weakPDE} has a unique solution almost surely.
\begin{lemma}[ \cite{charrier2013finite}, Lemma 2.1.]\label{h1solution}
Under assumptions A1-A3,  \eqref{weakPDE} has a unique  solution  $u\in H_0^1(U)$ almost surely and 
\begin{equation*} 
\|u\|_{H^1(U)}\leq \kappa \frac{\|f\|_{L^2(U)}}{a_{\min}}.
\end{equation*}

\end{lemma}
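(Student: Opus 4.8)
The plan is to apply the Lax--Milgram lemma pathwise, on the event of full probability on which the coefficients behave well, and then read off the norm bound by testing the variational identity \eqref{weakPDE} against the solution itself. First I would fix a realization $\omega\in\Omega$ in the event $\{a_{\min}>0\}\cap\{a\in C^2(\bar U)\}\cap\{f\in H^1(U)\}$, which has probability one by A1--A3; on this event $a$ is bounded (continuous on the compact set $\bar U$, so $a_{\max}<\infty$) and $f\in L^2(U)$, so I may treat $a$ and $f$ as deterministic. I work in the Hilbert space $H^1_0(U)$, using that $\|\cdot\|_{H^1(U)}$ and $|\cdot|_{H^1(U)}$ are equivalent on $H^1_0(U)$, with equivalence constants depending only on $U$ (Poincar\'e's inequality gives $\|v\|_{L^2(U)}\le C_P|v|_{H^1(U)}$ for a constant $C_P$ determined by $U$ alone).

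Next I would verify the three hypotheses of Lax--Milgram for the forms in \eqref{weakPDE}. Boundedness of $b$: by the Cauchy--Schwarz inequality, $|b(u,v)|=\big|\int_U a\,\nabla u\cdot\nabla v\,dx\big|\le a_{\max}\,|u|_{H^1(U)}|v|_{H^1(U)}$. Coercivity of $b$: $b(u,u)=\int_U a(x)|\nabla u(x)|^2\,dx\ge a_{\min}\,|u|^2_{H^1(U)}$, and the key point is that only $a_{\min}$ enters here. Boundedness of $L$: by Cauchy--Schwarz and Poincar\'e, $|L(v)|=\big|\int_U fv\,dx\big|\le \|f\|_{L^2(U)}\|v\|_{L^2(U)}\le C_P\|f\|_{L^2(U)}|v|_{H^1(U)}$. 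Lax--Milgram then yields a unique $u\in H^1_0(U)$ solving \eqref{weakPDE} for the fixed $\omega$, hence almost surely.

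For the a priori bound I would take $v=u$ in \eqref{weakPDE}, obtaining $a_{\min}|u|^2_{H^1(U)}\le b(u,u)=L(u)\le C_P\|f\|_{L^2(U)}|u|_{H^1(U)}$, so that $|u|_{H^1(U)}\le C_P\|f\|_{L^2(U)}/a_{\min}$; the norm--seminorm equivalence on $H^1_0(U)$ then upgrades this to $\|u\|_{H^1(U)}\le \kappa\,\|f\|_{L^2(U)}/a_{\min}$ with $\kappa$ depending only on $U$.

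There is no serious obstacle here — the argument is the textbook Lax--Milgram estimate — but the one thing to be careful about is that the constant $\kappa$ in the conclusion is genuinely deterministic: it must not depend on $\omega$ through $a_{\max}$. This is ensured precisely because coercivity is arranged to invoke only $a_{\min}$, while $a_{\max}$ is needed merely to see that $b$ is finite, and because $C_P$ depends on the fixed domain $U$ alone. A minor secondary point, needed so that the pathwise construction defines a bona fide $H^1_0(U)$-valued random variable, is measurability of $\omega\mapsto u$; this follows from the Lipschitz dependence of the solution on $(a,f)$, which is implicit in the estimate just derived.
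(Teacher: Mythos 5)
Your proof is correct and follows exactly the route the paper indicates: the paper does not prove this lemma itself but cites it from \cite{charrier2013finite} (Lemma 2.1), and the sentence immediately preceding the lemma attributes existence and uniqueness to the Lax--Milgram lemma, which is precisely the pathwise Lax--Milgram argument you carry out (coercivity via $a_{\min}$, boundedness via $a_{\max}$ and Poincar\'e, and testing against $v=u$ to get the a priori bound). Your observation that $\kappa$ must be independent of $a_{\max}$, and the remark on measurability of $\omega\mapsto u$, are both sound and appropriate.
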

The next theorem establishes the convergence rate of the approximate solution $u^{(k)}_n$ to the exact solution $u$.

\begin{theorem}\label{thmfek}
Let $u^{(k)}_n$ be the solution to \eqref{eqfem}. For $\mathrm{dim}(U)\leq 3$ with a $(k+1)$-time differentiable boundary $\partial U$, if $a(x)\in C^{k}(\bar U)$ and $f(x)\in H^{k-1}(U)$ for some positive integer $k$, then we have 
\begin{equation}\label{H1accuracy}
\|u-u_n^{(k)}\|_{H^1(U)}=O\Big( \kappa(a,k)\|f\|_{H^{k-1}(U)}2^{-kn}\Big),
\end{equation}
where the constant $\kappa(a,k)$ is defined as $$\kappa(a,k)=\frac{\max(\|a\|_{C^k(\bar U)},1)^{\frac{k^2}{2}+\frac{9}{2}k-\frac{1}{2}}} {\min(a_{\min},1)^{\frac{k^2}{2}+\frac{7}{2}k+\frac{3}{2}}}.$$
\end{theorem}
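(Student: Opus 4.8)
The plan is to combine three ingredients: (i) the quasi-optimality (C\'ea's lemma) of the Galerkin solution $u^{(k)}_n$, with the coercivity and continuity constants of $b$ tracked explicitly; (ii) the classical piecewise-polynomial interpolation estimate on the triangulation $\mathcal{T}_n$; and (iii) a global elliptic regularity bound placing $u$ in $H^{k+1}(U)$ with the dependence on $a$ made fully quantitative. Steps (i) and (ii) are essentially textbook and together contribute only a factor of order $\|a\|_{C^k(\bar U)}/a_{\min}$ and a mesh-dependent constant $C(U,k)\,2^{-kn}$; the entire difficulty, and the origin of the quadratic-in-$k$ exponents in $\kappa(a,k)$, lies in step (iii).

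For step (i): on $H^1_0(U)$ the form $b$ satisfies $b(v,v)\ge a_{\min}|v|_{H^1(U)}^2$, and since $|\cdot|_{H^1(U)}$ and $\|\cdot\|_{H^1(U)}$ are equivalent on $H^1_0(U)$ (Poincar\'e), $b$ is coercive with constant of order $a_{\min}$; it is continuous with constant at most $a_{\max}\le\|a\|_{C^k(\bar U)}$. Regarding $V^{(k)}_n$ (extended by zero) as a subspace of $H^1_0(U)$, C\'ea's lemma gives
\begin{equation*}
\|u-u^{(k)}_n\|_{H^1(U)}\le \frac{C\,\|a\|_{C^k(\bar U)}}{a_{\min}}\;\inf_{v\in V^{(k)}_n}\|u-v\|_{H^1(U)}.
\end{equation*}
For step (ii) I would take $v$ to be the degree-$k$ Lagrange interpolant of $u$ and invoke the Bramble--Hilbert/Deny--Lions bound $\|u-v\|_{H^1(U)}\le C(U,k)\,2^{-kn}\,|u|_{H^{k+1}(U)}$, valid once $u\in H^{k+1}(U)$ and with constant depending only on $U$, $k$, and the shape-regularity of $\mathcal{T}_n$. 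The only subtlety is the mismatch between $U$ and the meshed region $D_n$ near the curved boundary: by the boundary-fitted construction in Appendix~\ref{sectri} (so that $\partial U$, being $(k+1)$-times differentiable, is approximated to order $2^{-(k+1)n}$) this geometric contribution is of strictly higher order than $2^{-kn}$ and can be absorbed.

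Step (iii) is the heart of the matter. I would prove by induction on $m=1,\dots,k$ that $u\in H^{m+1}(U)$ with a bound of the form $\|u\|_{H^{m+1}(U)}\le C(U,m)\,\kappa_m(a)\,\|f\|_{H^{m-1}(U)}$, where $\kappa_m(a)=\max(\|a\|_{C^m(\bar U)},1)^{e_1(m)}/\min(a_{\min},1)^{e_2(m)}$ with $e_1,e_2$ linear in $m$. The base case $m=1$ is Lemma~\ref{h1solution}. For the inductive step I would rewrite \eqref{eq} as the Poisson equation $-\Delta u=a^{-1}\big(f+\nabla a\cdot\nabla u\big)$, apply the shift theorem for $-\Delta$ on the smooth domain $U$ (constant depending only on $U,m$) to get $\|u\|_{H^{m+1}(U)}\le C(U,m)\big(\|a^{-1}(f+\nabla a\cdot\nabla u)\|_{H^{m-1}(U)}+\|u\|_{H^1(U)}\big)$, and then estimate the source term using that multiplication by a $C^{m-1}$ function is bounded on $H^{m-1}(U)$ with norm at most $C(m)$ times its $C^{m-1}$ norm, together with the Fa\`a-di-Bruno-type bound $\|a^{-1}\|_{C^{m-1}(\bar U)}\le C(m)\,\|a\|_{C^{m-1}(\bar U)}^{\,m-1}/a_{\min}^{\,m}$. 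This yields a recursion schematically of the shape
\begin{equation*}
\|u\|_{H^{m+1}(U)}\le C(U,m)\,\frac{\|a\|_{C^m(\bar U)}^{\,O(m)}}{a_{\min}^{\,O(m)}}\,\big(\|f\|_{H^{m-1}(U)}+\|u\|_{H^{m}(U)}\big),
\end{equation*}
and unrolling it from $m=1$ up to $m=k$ multiplies these factors, so the accumulated exponents are $\sum_{j\le k}O(j)=O(k^2)$ --- precisely the source of the $k^2/2$ terms in $\kappa(a,k)$. Feeding $|u|_{H^{k+1}(U)}\le\|u\|_{H^{k+1}(U)}$ into the bound from (i)--(ii), collecting the extra $\|a\|_{C^k(\bar U)}/a_{\min}$ and the lower-order contribution of the $\|u\|_{H^1}$ term, and finally replacing $\|a\|_{C^k(\bar U)}$ by $\max(\|a\|_{C^k(\bar U)},1)$ and $a_{\min}$ by $\min(a_{\min},1)$ to make the estimate monotone, produces the stated constant $\kappa(a,k)$.

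The main obstacle is the quantitative bookkeeping in step (iii): making the $a$-dependence of the elliptic shift estimate come out with exactly the claimed exponents requires care both in the bound for $\|a^{-1}\|_{C^{m-1}(\bar U)}$ and in how the product estimates are applied at each level of the bootstrap, since a careless accounting readily gives the right qualitative form --- a bound of type $\exp\!\big(O(k^2\log\|a\|_{C^k(\bar U)})\big)$ --- but with the wrong constants. The geometric (variational-crime) issue near $\partial U$ is a secondary but genuine technical point that I would relegate to the appendix alongside the construction of $\mathcal{T}_n$.
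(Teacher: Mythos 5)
Your steps (i) and (ii) --- C\'ea's lemma and the Bramble--Hilbert/Deny--Lions interpolation estimate --- coincide with the paper's proof (the paper uses the symmetric-form improvement $(a_{\max}/a_{\min})^{1/2}$ of C\'ea where you use $\|a\|_{C^k}/a_{\min}$, which is weaker but still adequate). The genuine divergence is in step (iii), the $H^{k+1}$ regularity bound. The paper proves this (Proposition~\ref{higherorderregularity}) by redoing the Evans-style variable-coefficient regularity argument from scratch --- difference quotients for the interior estimate (Lemma~\ref{lemmainterior}), flattening of $\partial U$ for the boundary estimate (Lemma~\ref{h2boundary}), then a two-level induction over multi-indices with explicit tracking of how the constants pick up powers of $\|A\|_{C^k}$ and $A_{\min}^{-1}$. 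You instead rewrite the equation as $-\Delta u = a^{-1}(f+\nabla a\cdot\nabla u)$, treat $-\Delta$ by the constant-coefficient shift theorem (whose constant carries no $a$-dependence at all), and push the entire $a$-dependence into the Fa\`a di Bruno bound for $\|a^{-1}\|_{C^{m-1}}$ and the algebra-of-multipliers estimate in the bootstrap. This is a legitimately different, and arguably cleaner, route: it isolates the hard PDE input into a single citable black box and makes the quantitative bookkeeping purely algebraic. A direct unrolling of your recursion (with $\|a^{-1}\|_{C^{j-1}}\lesssim \max(\|a\|_{C^{j-1}},1)^{j-1}/\min(a_{\min},1)^j$ at level $j$) accumulates exponent $\sum_{j\le k}j=k(k+1)/2$, which is \emph{smaller} than the paper's $\tfrac{k^2}{2}+\tfrac{9}{2}k-1$ and $\tfrac{k^2}{2}+\tfrac{7}{2}k$; since smaller exponents on $\max(\|a\|_{C^k},1)$ and on $\min(a_{\min},1)^{-1}$ dominate the paper's $\kappa(a,k)$, the theorem as stated follows a fortiori, so your worry about ``matching'' the exact exponents is not actually an obstruction.

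Two small points. First, to pass from the variational identity $\int a\nabla u\cdot\nabla v=\int fv$ to the Poisson weak formulation $\int\nabla u\cdot\nabla w=\int a^{-1}(f+\nabla a\cdot\nabla u)w$, you should either note that the base-case $H^2_{loc}$ regularity makes the rewriting hold a.e., or observe directly that it follows from testing the original form with $v=a^{-1}w$ (valid since $a^{-1}\in W^{1,\infty}$ under A1--A2); this deserves a sentence. Second, your base case is mislabelled: at $m=1$ your inductive statement asserts $u\in H^2(U)$, which Lemma~\ref{h1solution} does not give --- it gives only $H^1$. What you actually mean is that $m=1$ is already the first application of the shift theorem, taking Lemma~\ref{h1solution}'s $H^1$ bound as input; that is fine, but say it that way (this is precisely the step that absorbs the paper's Lemma~\ref{h2boundary}).
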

The proof of Theorem~\ref{thmfek} is given in Appendix~\ref{secproof}.
In our analysis, we focus on the case $k=2$ that is sufficient for our analysis. We state the results for this special case.

\begin{corollary}
For $dim(U)\leq 3$, if $a(x)\in C^2(\bar U)$ and $f(x)\in H^1(U)$, then
\begin{equation*}
\|u-u_n^{(2)}\|_{H^1(U)} =O\Big( \frac{\max(\|a\|_{C^2(\bar U)},1)^{10.5}}{\min(a_{\min},1)^{10.5}}\|f\|_{H^{1}(U)} 2^{-2n}\Big).
\end{equation*}

\end{corollary}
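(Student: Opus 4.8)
Since the corollary is an immediate specialization of Theorem~\ref{thmfek}, the proof plan is essentially substitution together with a short verification that the hypotheses match.

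\medskip

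\noindent\textbf{Proof plan.} The plan is to simply invoke Theorem~\ref{thmfek} with $k=2$ and check that the constant $\kappa(a,2)$ collapses to the exponent $10.5$ claimed. First I would verify the hypotheses: we are in dimension $\mathrm{dim}(U)\leq 3$, the boundary $\partial U$ was assumed twice differentiable at the start of Section~\ref{SecMLMC} (so it is in particular $(k+1)=3$-times differentiable under the standing regularity we impose, or we simply quote the domain regularity needed for $k=2$), and the corollary's own hypotheses $a\in C^2(\bar U)$ and $f\in H^1(U)$ are exactly the conditions $a\in C^k(\bar U)$ and $f\in H^{k-1}(U)$ of Theorem~\ref{thmfek} specialized at $k=2$. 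So Theorem~\ref{thmfek} applies verbatim and gives
$$
\|u-u_n^{(2)}\|_{H^1(U)} = O\Big(\kappa(a,2)\,\|f\|_{H^{1}(U)}\,2^{-2n}\Big).
$$

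\medskip

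\noindent Second, I would compute the two exponents in $\kappa(a,k)=\dfrac{\max(\|a\|_{C^k(\bar U)},1)^{k^2/2 + 9k/2 - 1/2}}{\min(a_{\min},1)^{k^2/2 + 7k/2 + 3/2}}$ at $k=2$. The numerator exponent is $\tfrac{4}{2} + \tfrac{18}{2} - \tfrac12 = 2 + 9 - 0.5 = 10.5$, and the denominator exponent is $\tfrac{4}{2} + \tfrac{14}{2} + \tfrac32 = 2 + 7 + 1.5 = 10.5$. Substituting these into the expression for $\kappa(a,2)$ yields exactly
$$
\kappa(a,2) = \frac{\max(\|a\|_{C^2(\bar U)},1)^{10.5}}{\min(a_{\min},1)^{10.5}},
$$
which matches the constant displayed in the corollary. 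Combining the two displays gives the stated bound.

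\medskip

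\noindent\textbf{Main obstacle.} There is essentially no obstacle here; the only thing to be careful about is the bookkeeping of the boundary-smoothness requirement — Theorem~\ref{thmfek} asks for a $(k+1)$-times differentiable boundary, so for $k=2$ one needs $\partial U$ to be $C^3$. If the paper's standing assumption (``twice differentiable boundary'') is meant only as a minimal convention, one should either note that the corollary as stated carries an implicit $C^3$ boundary requirement, or observe that the error estimate of Theorem~\ref{thmfek} for $k=2$ in fact only requires enough boundary regularity to guarantee the $H^3$ elliptic regularity used in its proof, which is the genuinely relevant hypothesis. Beyond that caveat, the proof is a one-line substitution, and it suffices to write: ``This is Theorem~\ref{thmfek} with $k=2$, noting that the exponents $k^2/2+9k/2-1/2$ and $k^2/2+7k/2+3/2$ both equal $10.5$ at $k=2$.''
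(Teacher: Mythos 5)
Your proof is correct and is exactly what the paper intends: the corollary is stated without proof as the $k=2$ specialization of Theorem~\ref{thmfek}, and your arithmetic confirming both exponents collapse to $10.5$ is right. Your side remark about the boundary regularity is also accurate — Theorem~\ref{thmfek} formally requires a $(k+1)$-times differentiable boundary, i.e.\ $C^3$ for $k=2$, which is slightly stronger than the ``twice differentiable'' standing assumption in Section~\ref{SecMLMC}, a small mismatch the paper leaves implicit.
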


\paragraph{Quadrature Error Analysis.}\label{secquadrature}

The numerical solution $u^{(k)}_n$ in \eqref{eqfem} requires the evaluation of the integrals  $b(w,v)=\sum_{K\in \mathcal{T}_n }\int_{K} a(x)\nabla w(x) \cdot\nabla v(x) dx$ and $L(v)=\sum_{K\in \mathcal{T}_n}\int_{K}f(x) v(x) dx$. This requires  generating the entire continuous random fields $a(x)$ and $f(x)$. For the evaluation of these integrals we apply quadrature approximation.

In our analysis, we use linear approximation to $a(\cdot)$ and $f(\cdot)$ on each simplex $K\in\mathcal{T}_n,$ then the integrals can be calculated analytically. We will give a careful analysis for the quadrature error of $b(w,v)$. The analysis for $L(v)$ is similar and thus is omitted. 

Let $\tilde{a}(\cdot)$ be the linear interpolation of $a(\cdot)$ given its values on vertices such that for all simplex $K\in\mathcal{T}_n$,
$
\tilde{a}(x)=a(x)$ if $x$ is a vertice of $K$, and  $\tilde{a}|_{K}$ is linear.
Such interpolation is easy to obtain using piecewise linear basis functions discussed in Section \ref{seclinear}. We define the bilinear form induced by $\tilde a(\cdot)$ as 
$$
\tilde{b}_n(w,v)=\sum_{K\in \mathcal{T}_n} \int_{K}\tilde{a}( x) \nabla w(x)\cdot\nabla v(x) dx,
$$
and denote by $\tilde{u}_n\in V^{(2)}_n$ the solution to 
\begin{equation}\label{eqfemqua}
\tilde{b}_n(\tilde{u}_n,v)=L(v), \mbox{ for all $v\in V^{(2)}_n.$}
\end{equation}
The next theorem establishes the convergence rate for $\tilde u_n$ to the solution $u$. The proof for Theorem~\ref{propquadr} is given in Appendix~\ref{secproof}. 
\begin{theorem}\label{propquadr}
For $dim(U)\leq 3$, if $a(x)\in C^2(\bar U)$ and $f(x)\in H^1(U)$, then  
\begin{equation*} 
\|u-\tilde{u}_{n}\|_{H^1(U)}=O\Big(\frac{\min(\|a\|_{C^2(\bar{U})},1)^{11.5}}{\min(a_{\min},1)^{11.5}}\|f\|_{H^1(U)}2^{-2n}\Big).
\end{equation*}

\end{theorem}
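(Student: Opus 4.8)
\textbf{Proof proposal for Theorem~\ref{propquadr}.}

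The plan is to split the error $\|u-\tilde u_n\|_{H^1(U)}$ via the triangle inequality into the finite-element discretization error and the quadrature (coefficient-perturbation) error:
$$
\|u-\tilde u_n\|_{H^1(U)} \leq \|u-u^{(2)}_n\|_{H^1(U)} + \|u^{(2)}_n-\tilde u_n\|_{H^1(U)}.
$$
The first term is already controlled by the Corollary to Theorem~\ref{thmfek}, giving a bound of order $\frac{\max(\|a\|_{C^2(\bar U)},1)^{10.5}}{\min(a_{\min},1)^{10.5}}\|f\|_{H^1(U)}2^{-2n}$, which is dominated by the claimed bound. So the real work is estimating the second term, the difference between the Galerkin solutions using the exact coefficient $a$ and its piecewise-linear interpolant $\tilde a$, both over the same space $V^{(2)}_n$.

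For the second term I would use a Strang-lemma type argument. Both $u^{(2)}_n$ and $\tilde u_n$ live in $V^{(2)}_n$; subtracting their defining equations \eqref{eqfem} and \eqref{eqfemqua} and testing against $v=u^{(2)}_n-\tilde u_n$ gives
$$
\tilde b_n(u^{(2)}_n-\tilde u_n,\, u^{(2)}_n-\tilde u_n) = \tilde b_n(u^{(2)}_n,v) - b(u^{(2)}_n,v) = \sum_{K\in\mathcal T_n}\int_K (\tilde a(x)-a(x))\,\nabla u^{(2)}_n(x)\cdot\nabla v(x)\,dx.
$$
The left side is bounded below by $a_{\min}\,|u^{(2)}_n-\tilde u_n|_{H^1(U)}^2$ (using that $\tilde a\geq a_{\min}$ since interpolation of a function bounded below by $a_{\min}$ stays bounded below by $a_{\min}$), and the right side is bounded by $\|\tilde a-a\|_{L^\infty(U)}\,|u^{(2)}_n|_{H^1(U)}\,|v|_{H^1(U)}$. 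The standard interpolation estimate for linear interpolation of a $C^2$ function on a simplex of diameter $O(2^{-n})$ gives $\|\tilde a-a\|_{L^\infty(U)} = O(\|a\|_{C^2(\bar U)}2^{-2n})$. Dividing through by $|v|_{H^1(U)}=|u^{(2)}_n-\tilde u_n|_{H^1(U)}$ and using norm equivalence on $H^1_0$ yields
$$
\|u^{(2)}_n-\tilde u_n\|_{H^1(U)} = O\Big(\frac{\|a\|_{C^2(\bar U)}}{a_{\min}}\,|u^{(2)}_n|_{H^1(U)}\,2^{-2n}\Big).
$$
Finally I would bound $|u^{(2)}_n|_{H^1(U)}$ uniformly in $n$: testing \eqref{eqfem} against $v=u^{(2)}_n$ and using coercivity gives $|u^{(2)}_n|_{H^1(U)}\leq \kappa\|f\|_{L^2(U)}/a_{\min}$, the same a priori bound as in Lemma~\ref{h1solution}. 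Collecting the powers of $\max(\|a\|_{C^2(\bar U)},1)$ and $\min(a_{\min},1)$ from the discretization term ($10.5$ over $10.5$) and the quadrature term (one extra factor of $\|a\|_{C^2}/a_{\min}$ on top of $1/a_{\min}$ from $|u^{(2)}_n|_{H^1}$) produces the stated exponent $11.5$, and the $\|f\|_{H^1(U)}$ factor (one uses $\|f\|_{L^2}\leq\|f\|_{H^1}$ for the quadrature term) and the $2^{-2n}$ rate follow.

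The main obstacle I anticipate is bookkeeping the constants so that the final exponent is exactly $11.5$ and matches the Corollary's $10.5$ plus the one additional power each of $\|a\|_{C^2}$ and $a_{\min}^{-1}$ coming from the quadrature perturbation; one has to be careful that the interpolation constant for $\tilde a-a$ genuinely only costs $\|a\|_{C^2(\bar U)}$ (not a higher power) and that the lower bound $\tilde a\geq a_{\min}$ is used rather than a cruder bound. A secondary technical point is justifying that linear interpolation is well-defined and the estimate $\|\tilde a - a\|_{L^\infty(K)} \lesssim \|a\|_{C^2}\,\mathrm{diam}(K)^2$ holds on the regular triangulation $\mathcal T_n$ uniformly over $K$ — this is classical but should be cited or stated, referring to the regularity of $\mathcal T_n$ from Appendix~\ref{sectri}.
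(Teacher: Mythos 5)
Your proposal is correct, and it takes a genuinely different route from the paper. The paper invokes Strang's first lemma (Lemma 3.12 of \cite{knabner2003numerical}) directly, which gives
$$
\|u-\tilde u_n\|_{H^1(U)} \le \inf_{v\in V_n^{(2)}}\Big\{\Big(1+\tfrac{a_{\max}}{a_{\min}}\Big)\|u-v\|_{H^1(U)} + \tfrac{1}{a_{\min}}\sup_{w\in V_n^{(2)}}\tfrac{|b(v,w)-\tilde b_n(v,w)|}{\|w\|_{H^1(U)}}\Big\},
$$
then chooses $v=u_n^{(2)}$ and bounds the consistency term via the linear-interpolation estimate $\|\tilde a - a\|_{L^\infty}=O(\|a\|_{C^2(\bar U)}2^{-2n})$. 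You instead split the error via triangle inequality into the Galerkin error $\|u-u_n^{(2)}\|_{H^1}$ (controlled by the Corollary to Theorem~\ref{thmfek}) and the perturbation error $\|u_n^{(2)}-\tilde u_n\|_{H^1}$, then derive the latter from scratch by subtracting the defining equations, testing against the difference, and using coercivity of $\tilde b_n$ (which relies on the nice observation that linear interpolation preserves the lower bound $a_{\min}$ on each simplex). Both arguments estimate the same perturbation integral, but yours avoids citing Strang's lemma and is self-contained — a legitimate trade.

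One remark on your constant bookkeeping: you write that the quadrature contribution of ``one extra factor of $\|a\|_{C^2}/a_{\min}$'' on top of the discretization term's $10.5/10.5$ ``produces the stated exponent $11.5$,'' but this reasoning conflates the paper's multiplicative structure with your additive one. In your decomposition the two error contributions are \emph{added}, so the final exponents are the \emph{maximum} of those in the two terms: $10.5/10.5$ from the Galerkin term and $1/2$ from the quadrature term, giving $10.5/10.5$ overall. That is actually a slightly \emph{sharper} bound than the stated $11.5/11.5$, and of course still satisfies the $O(\cdot)$ claim. In the paper's Strang-lemma route, the extra powers genuinely arise multiplicatively, because the factor $(1+a_{\max}/a_{\min})$ scales the Galerkin-error term and $a_{\max}\le\|a\|_{C^2(\bar U)}$. (Also note the theorem as printed reads $\min(\|a\|_{C^2(\bar U)},1)^{11.5}$ in the numerator; comparing with the preceding Corollary and the proof this should be $\max$, otherwise the bound would improve as $a$ becomes rougher.)
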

This accuracy is sufficient for the unbiased estimator to have finite variance and finite expected stopping time.
Similarly, we let $\tilde f$ be the linear interpolation of $f$ on $\mathcal T_n$ and define $\tilde L(v) = \sum_{K \in \mathcal T_n }\int_K \tilde f(x) v(x) dx.$ We redefine $\tilde{u}_n$ such that 
\begin{equation}\label{eq1}
\tilde{b}_n(\tilde{u}_n,v)=\tilde L(v), \mbox{ for all $v\in V^{(2)}_n.$}
\end{equation}
Similar approximation results as that of Theorem \ref{propquadr} can be obtained. We omit the repetitive details.

%

\subsection{Construction of the unbiased estimator}\label{secunbiased}
In this section, we apply the results obtained in Section~\ref{secquadrature} to construct an unbiased estimator with both finite variance and finite expected computational cost through \eqref{unbiasedest}. 
We start with providing an upper bound of $\E[\mathcal{Q}(u)-\mathcal{Q}(\tilde u_{n})]^2$.
\begin{proposition}\label{propbeta}
Under assumptions A1-A4, we have
\begin{equation}\label{outputerror}
\E[\mathcal{Q}(u)-\mathcal{Q}(\tilde u_{n})]^2=O(\kappa_q 2^{-4n}),
\end{equation}
where $u$ is the solution to \eqref{weakPDE} and $\tilde{u}_n$ is the solution to \eqref{eq1}, and $\kappa_q$ the Lipschitz constant appeared in condition A4.
\end{proposition}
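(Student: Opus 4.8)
The plan is to reduce the output error to the $H^1$-error estimate of Theorem~\ref{propquadr} via the local Lipschitz condition A4, and then control the resulting random prefactors with the all-order moment bounds in A1--A3. First I would apply A4 with $w_1=u$ and $w_2=\tilde u_n$, square, and use $\max\{x,y\}^{2p'}\le 2^{2p'-1}(x^{2p'}+y^{2p'})$ together with the triangle inequality $\|\tilde u_n\|_{H^1(U)}\le \|u\|_{H^1(U)}+\|u-\tilde u_n\|_{H^1(U)}$ to obtain
\[
\big|\mathcal{Q}(u)-\mathcal{Q}(\tilde u_n)\big|^2 \le C_{p'}\,\kappa_q^2\Big(\|u\|_{H^1(U)}^{2p'}+\|u-\tilde u_n\|_{H^1(U)}^{2p'}\Big)\,\|u-\tilde u_n\|_{H^1(U)}^2 ,
\]
with $C_{p'}$ depending only on $p'$. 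Using the triangle inequality this way is the cleanest way to get a uniform-in-$n$ handle on $\|\tilde u_n\|_{H^1(U)}$; alternatively one can argue directly from discrete coercivity of $\tilde b_n$, noting that $\min_{x\in\bar U}\tilde a(x)\ge a_{\min}$ because the linear interpolant attains its extrema at vertices where it equals $a$.

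Next I would take expectations and isolate the error factor by Cauchy--Schwarz (or H\"older), e.g.
\[
\E\big[\|u\|_{H^1(U)}^{2p'}\|u-\tilde u_n\|_{H^1(U)}^2\big]\le \big(\E\|u\|_{H^1(U)}^{4p'}\big)^{1/2}\big(\E\|u-\tilde u_n\|_{H^1(U)}^{4}\big)^{1/2},
\]
and similarly for the term involving $\|u-\tilde u_n\|_{H^1(U)}^{2p'+2}$. Into the error factors I substitute Theorem~\ref{propquadr}, $\|u-\tilde u_n\|_{H^1(U)}\le C\,\kappa(a)\,\|f\|_{H^1(U)}\,2^{-2n}$ with $\kappa(a)=\max(\|a\|_{C^2(\bar U)},1)^{11.5}/\min(a_{\min},1)^{11.5}$; the entire $n$-dependence factors out as $2^{-4n}$ (the higher-power term contributes $2^{-2n(p'+1)}=O(2^{-4n})$ since $p'\ge 0$... and if $p'<1$ one simply keeps the exponent and notes it still decays at least as $2^{-4n}$ when combined with the bound below — more simply, bound $\|u-\tilde u_n\|^{2p'}$ by a constant times $\|u-\tilde u_n\|^{0}$ plus $\|u-\tilde u_n\|^{2p'}$ if $p'\ge1$ and absorb). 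For $\|u\|_{H^1(U)}$ I invoke Lemma~\ref{h1solution}, $\|u\|_{H^1(U)}\le \kappa\,\|f\|_{L^2(U)}/a_{\min}$.

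The remaining step, which is the only real work, is to verify that every expectation appearing is finite and independent of $n$. Each one is the expectation of a finite product of fixed powers of $\max(\|a\|_{C^2(\bar U)},1)$, $1/\min(a_{\min},1)$ and $\|f\|_{H^1(U)}$; by the generalized H\"older inequality it is bounded by a product of $L^r$-norms of these three quantities for suitable finite $r$, and each such norm is finite precisely because A1, A2 and A3 supply moments of \emph{every} order. Collecting terms yields $\E[\mathcal{Q}(u)-\mathcal{Q}(\tilde u_n)]^2\le C\,\kappa_q^2\,2^{-4n}$ with $C$ depending on $p'$, $d$ and the moment constants in A1--A3 but not on $n$, which is the asserted $O(\kappa_q 2^{-4n})$ (the $\kappa_q$ versus $\kappa_q^2$ discrepancy is absorbed into the generic constant).

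I expect the main obstacle to be the bookkeeping of the H\"older exponents rather than any single sharp estimate: the power of $\kappa(a)$ that must be integrated is large ($\sim 11.5$ times a power that itself depends on $p'$), so one genuinely needs the all-order moment hypotheses A1--A3, and the choice of conjugate exponents has to be made so that the error factor keeps its full $2^{-4n}$ decay while every prefactor norm stays finite. A minor secondary point to keep visible in the write-up is the uniform-in-$n$ control of $\|\tilde u_n\|_{H^1(U)}$, handled as above.
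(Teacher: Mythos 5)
Your proof takes exactly the route the paper indicates for this proposition, namely using A4 to reduce the output error to an $H^1$-error, then invoking Theorem~\ref{propquadr} for the $2^{-2n}$ convergence rate and Lemma~\ref{h1solution} together with the all-order moment bounds A1--A3 and H\"older's inequality to control the random prefactors. The only quibble is a small arithmetic slip in your parenthetical: the higher-power term $\E\|u-\tilde u_n\|_{H^1(U)}^{2p'+2}$ contributes $2^{-2n(2p'+2)}=2^{-4n(p'+1)}$ rather than $2^{-2n(p'+1)}$, so $p'\ge 0$ already yields $O(2^{-4n})$ and the subsequent worry about the case $p'<1$ is unnecessary.
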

\begin{proof}
The proof is a direct application of  Lemma~\ref{h1solution}, Theorem~\ref{propquadr} and A4 and therefore is omitted.
\end{proof}

We proceed to the construction of the unbiased estimator $\tilde{Z}$ via  \eqref{unbiasedest}. Choose
$$\Prob(N=n)=p_n\propto 2^{-\frac{4+d}{2}n}.$$
For each $n$, let $\tilde u_{n-1}$ and $\tilde u_{n}$ be defined as in \eqref{eq1} with respect to the same $a$ and $f$.
Notice that the computation of $\tilde u_n$ requires the values of $a$ and $f$ only on the vertices of $\mathcal{T}_n$.
Then, $Z_{n-1}$ and $Z_n$ are given by $Z_{n-1}=\mathcal Q(\tilde u_{n-1})$ and $Z_n=\mathcal Q(\tilde u_{n})$. 
With this coupling, according to Proposition \ref{propbeta}, we have that 
$$
\E(Z_n-Z_{n-1})^2\leq 2\E[\mathcal Q(\tilde u_{n})-\mathcal Q(u)]^2+2\E[\mathcal Q(\tilde u_{{n-1}})-\mathcal Q(u)]^2= O( 2^{-4n}).
$$
According to equation \eqref{secondmomentsum}, for $d=\mathrm{dim}(U)\leq 3$, we have
\begin{equation*}
\E(\tilde Z^2)\leq \sum_{n=1}^\infty 2^{-4n}/2^{-(4+d)n/2}<\infty.
\end{equation*}
Furthermore,  \eqref{eq1} requires solving $O(2^{dn})$ sparse linear equations. The computational cost of obtaining $u_{n}$ is $O(n2^{dn})$. According to \eqref{expectedcost}, the expected cost of generating a single copy of $\tilde Z$ is
$$
\E(C)=\sum_{n=1}^\infty p_n c_n\leq \sum_{i=1}^{\infty} n2^{dn}\cdot 2^{-(4+d)n/2}<\infty.
$$ 
This guarantees that the unbiased estimator $\tilde Z$ has a finite variance and can be generated in finite expected time.
%

\section{Simulation Study}\label{Secsim}

\subsection{An illustrating example}\label{secex1}
We start with a simple example for which closed form solution is available and therefore we are able to check the accuracy of the simulation. Let $U=(0,1)^2$, $f(x)=\sin(\pi x_1)\sin(\pi x_2)$ and $a(x)=e^{W}$, where $W$ is a standard normal distributed random variable. In this example, the exact solution to \eqref{eq} is 
\begin{equation}
u(x_1,x_2)=(2\pi^2)^{-1} e^{-W}\sin(\pi x_1)\sin(\pi x_2).
\end{equation}
We are interested in the output functional $\mathcal{Q}(u)=|u|^2_{H^1(U)}$ whose expectation is in a closed form.
$$\E|u|^2_{H^1(U)}=\E[(8\pi^2)^{-1}e^{-2W}]=(8\pi^2)^{-1}e^2\approx 0.0936.$$
Let  $p_n=0.875\times 0.125^{n}$ and $Z_n=\mathcal{Q}(\tilde u_{n})$ for $n> 0$.
Here $Z_0$ is not a constant and we estimate $\E(Z_0)$ and $\E(Z-Z_0)$ separately. 
To be more precise, we first estimate $\E(Z_0)$ using the usual Monte Carlo estimate with $10000$ replicates and obtain $\hat{Z}_0=0.036$ with standard error $0.0024$. The estimator according to \eqref{unbiasedest} is
\begin{equation}\label{z0not0}
\tilde{Z}=\hat{Z}_0+\frac{Z_{N}-Z_{N-1}}{p_{N}}.
\end{equation}
We perform Monte Carlo simulation with $M=10000$ replications. The averaged estimator is $0.0939$ with the standard deviation $0.0036$. Figure~\ref{fig:ex1hist}  shows the histogram of samples of $\tilde{Z}$ and $\log\tilde{Z}$.

In order to conform our analytical results, we simulate the expectation for $\E(Z_{n}-Z)^2$ and $c_n$ for $n=0,..,5,$ using $1000$ Monte Carlo sample for each of them. The scatter plot of $n$ and $\log_2(\E(Z_{n}-Z)^2)$  is shown in Figure~\ref{fig:ex1mse}. The slope of the regression line in this graph is $-3.85$, which is close to the theoretical value $-4.$ The scatter plot of $n$ and $\log_2 c_n$ is shown in Figure~\ref{fig:ex1timecost}. The slope of the regression line in this graph is $2.031$, which is close to the theoretical value $2.$
\begin{figure}[!ht]
\centering
\subfigure{}\includegraphics[scale = 0.3]{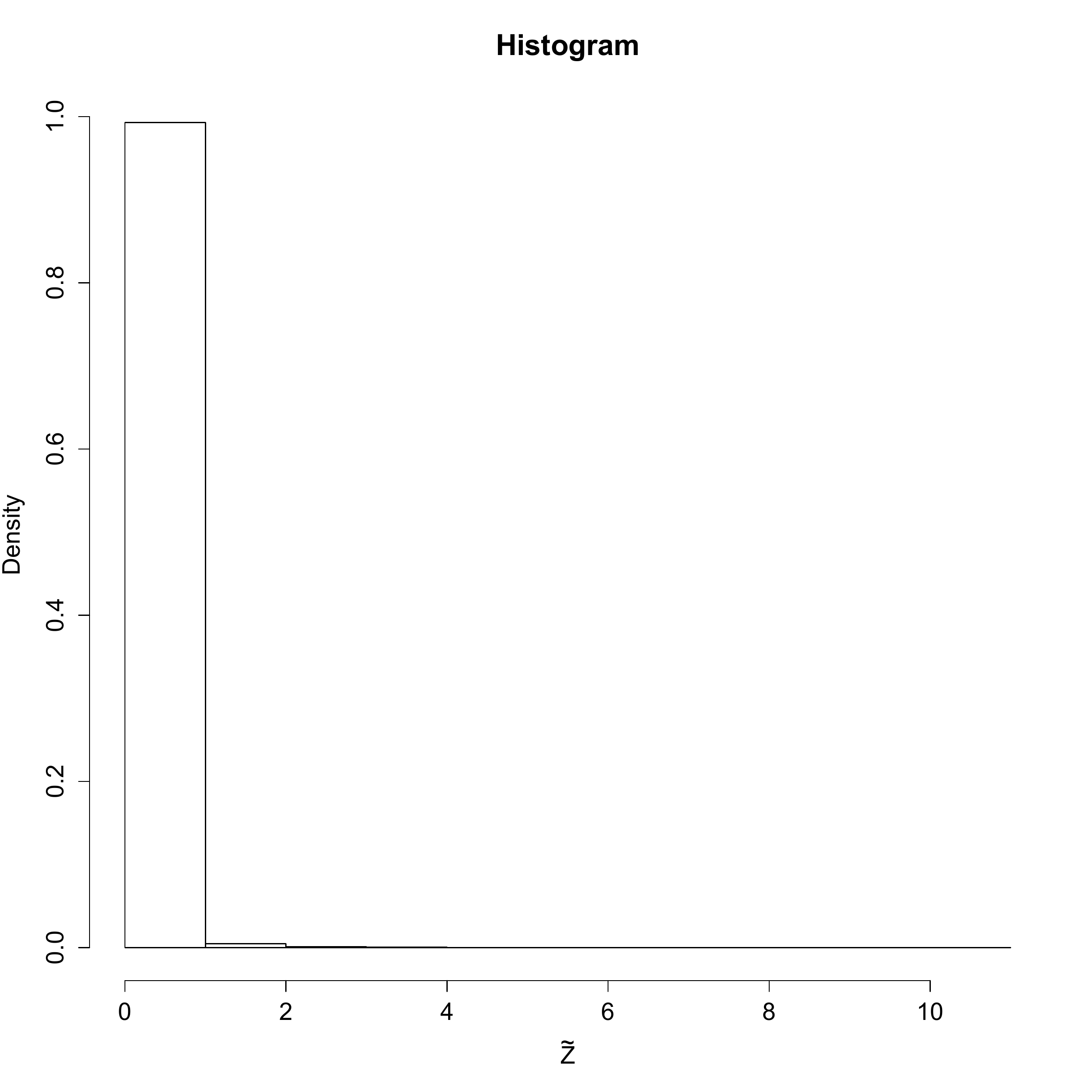}
\subfigure{\label{fig:ex1loghist}}\includegraphics[scale = 0.3]{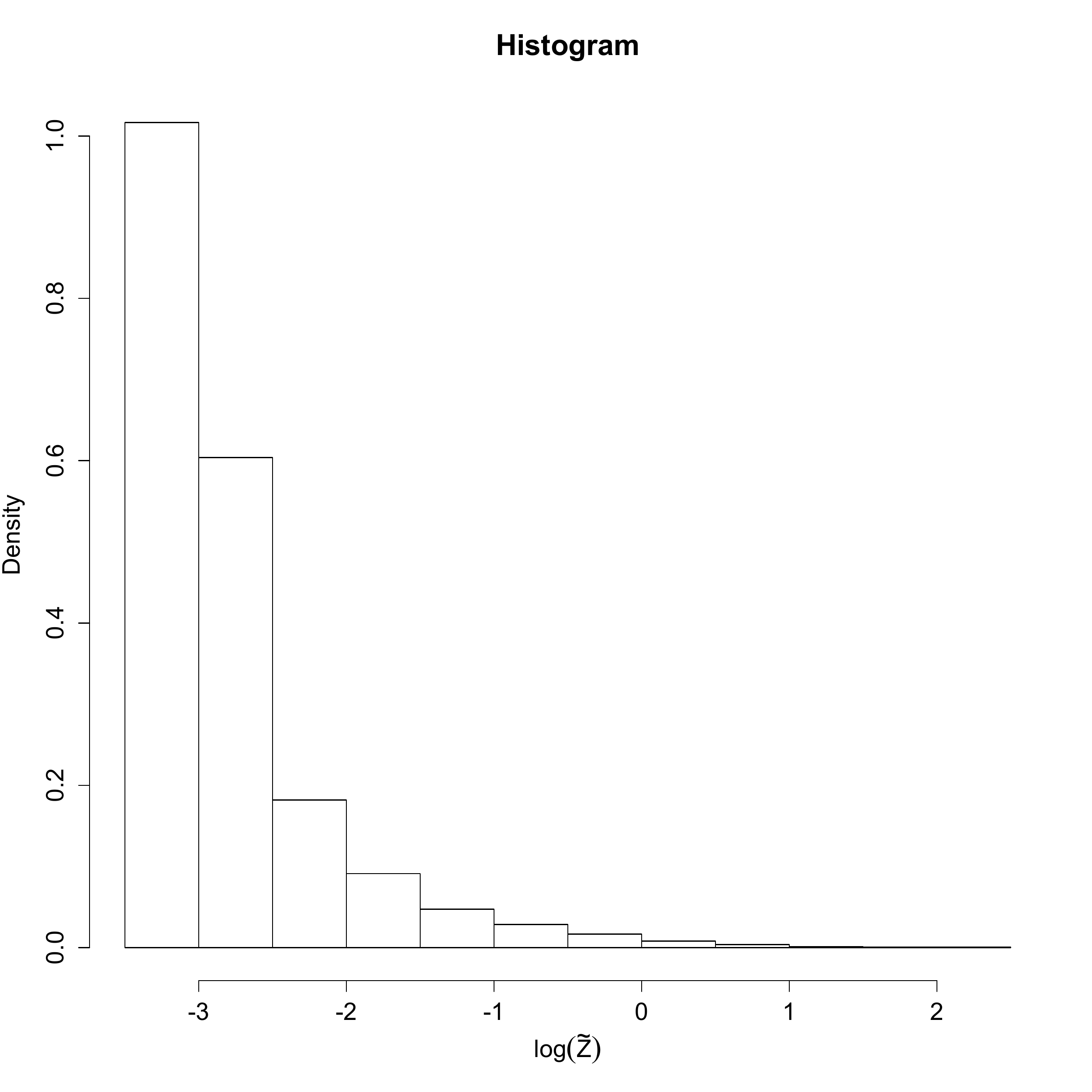}
\caption{Histogram of Monte Carlo sample of $\tilde{Z}$ and $\log\tilde{Z}$ that are defined in Section \ref{secex1}. }
\label{fig:ex1hist}
\end{figure}

\begin{figure}[!ht]
\centering
\includegraphics[scale = 0.4]{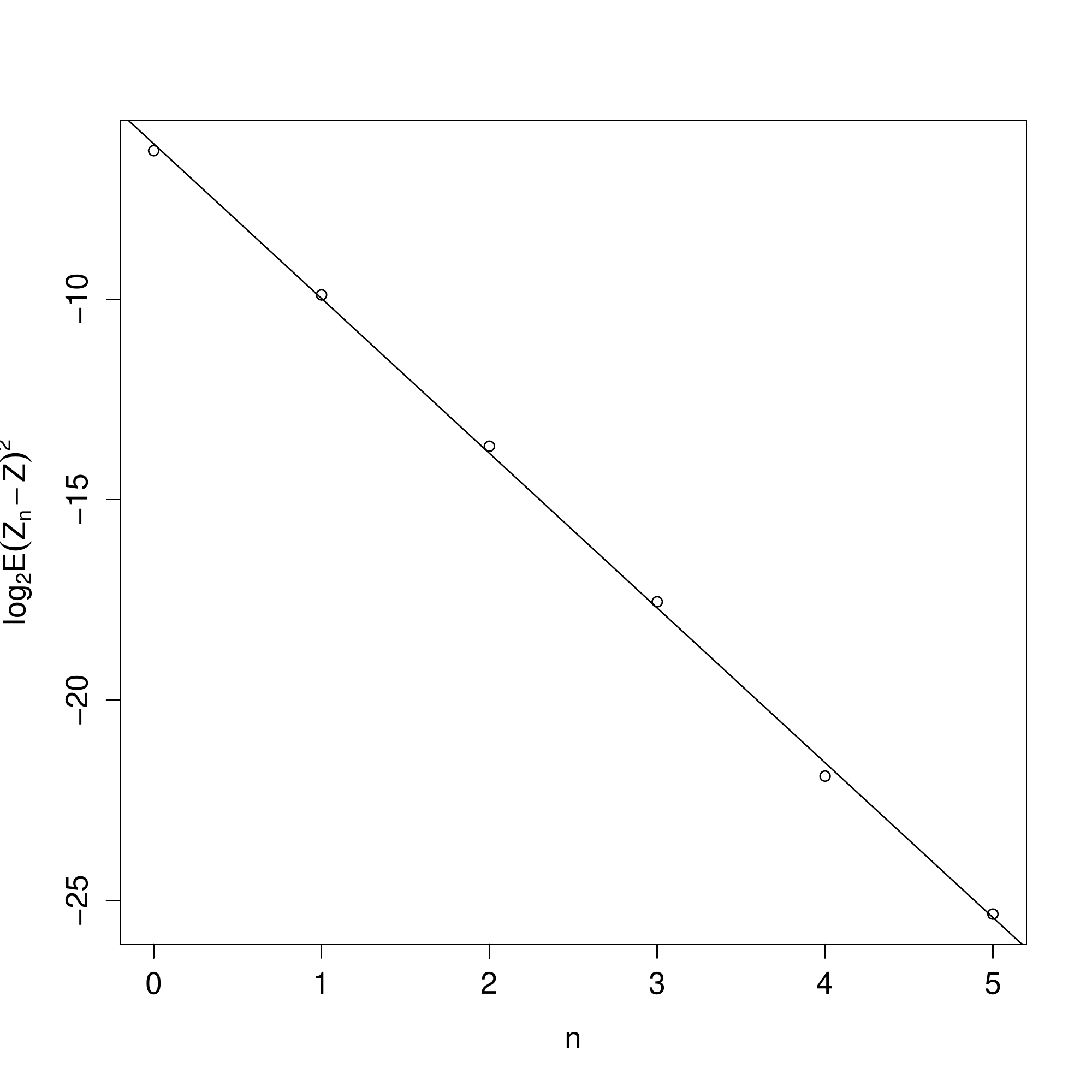}
\caption{Scatter plots for $n$ against $\log(\E(Z-Z_{n})^2)$  in the example in Section \ref{secex1}.}
\label{fig:ex1mse}
\end{figure}

\begin{figure}[!ht]
\centering
\includegraphics[scale = 0.4]{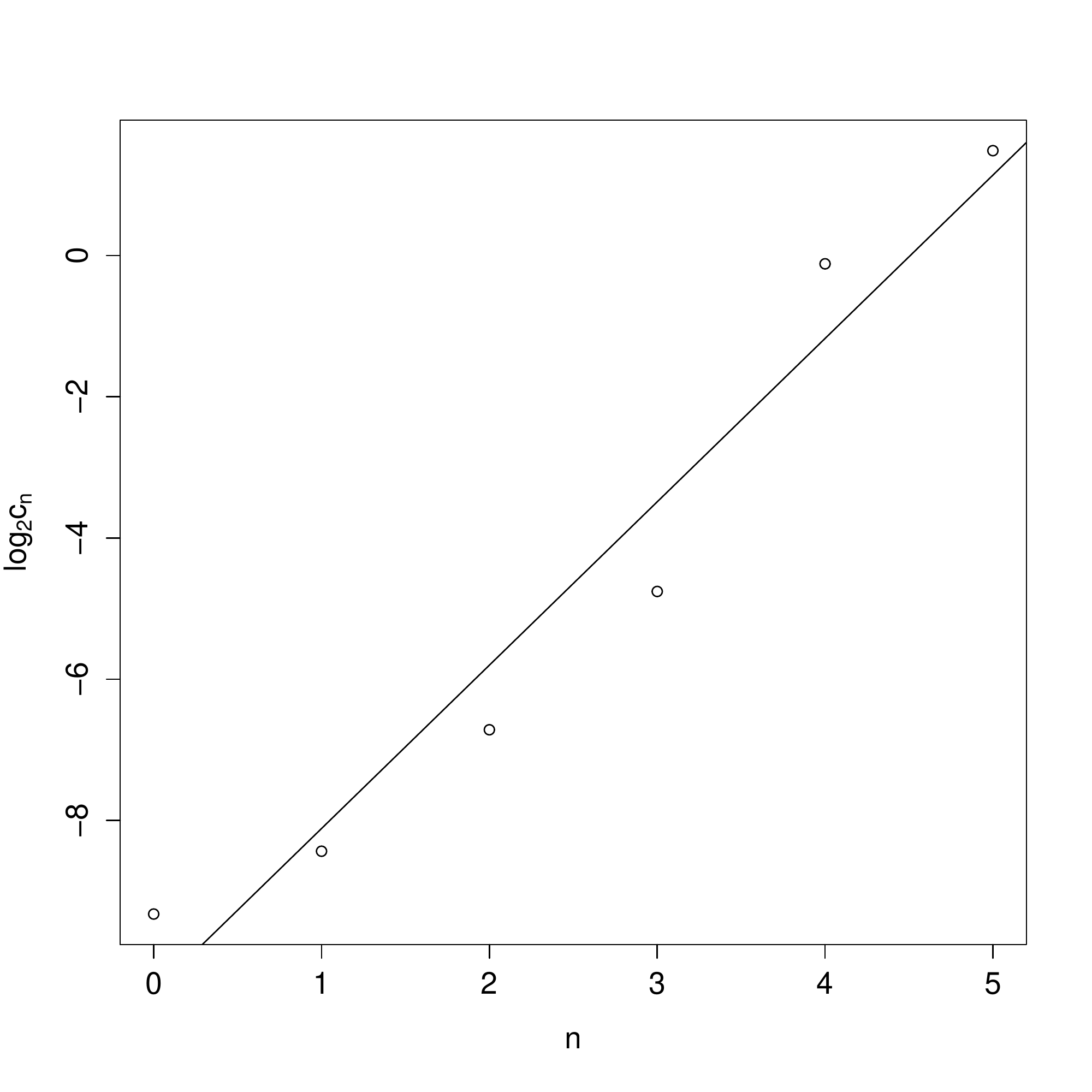}
\caption{Scatter plots for  $n$ against $\log(c_n)$ in the example in Section \ref{secex1}.}
\label{fig:ex1timecost}
\end{figure}
\subsection{Log-normal random field with Gaussian covariance kernel}
Here we let $U=(0,1)^2$, $f=1$, and $\log a$ be  modeled as a Gaussian random field with the covariance function
$$
Cov(\log(a(x)),\log(a(y)))=\exp(-{|x-y|^2}/{\lambda}).
$$ 
with $\lambda=0.03$.
Such a log-normal random field is infinitely differentiable and satisfies assumptions A1 and A2.
 We use the circulant embedding method (see \cite{dietrich1997fast}) to generate the random field $\log a$ exactly. We use the same estimator as in \eqref{z0not0} and consider  $\mathcal{Q}(u)=|u|^2_{H^1(U)}$.
We perform Monte Carlo simulation for $M=100000$ replications. The averaged estimator  for the expectation $\E\mathcal{Q}(u)$ is $0.0428$ and the standard deviation is $0.0032$ for the averaged estimator. Figure ~\ref{fig:ex2hist} shows the histogram of the Monte Carlo sample.
\begin{figure}[!ht]
\centering
\includegraphics[scale = 0.4]{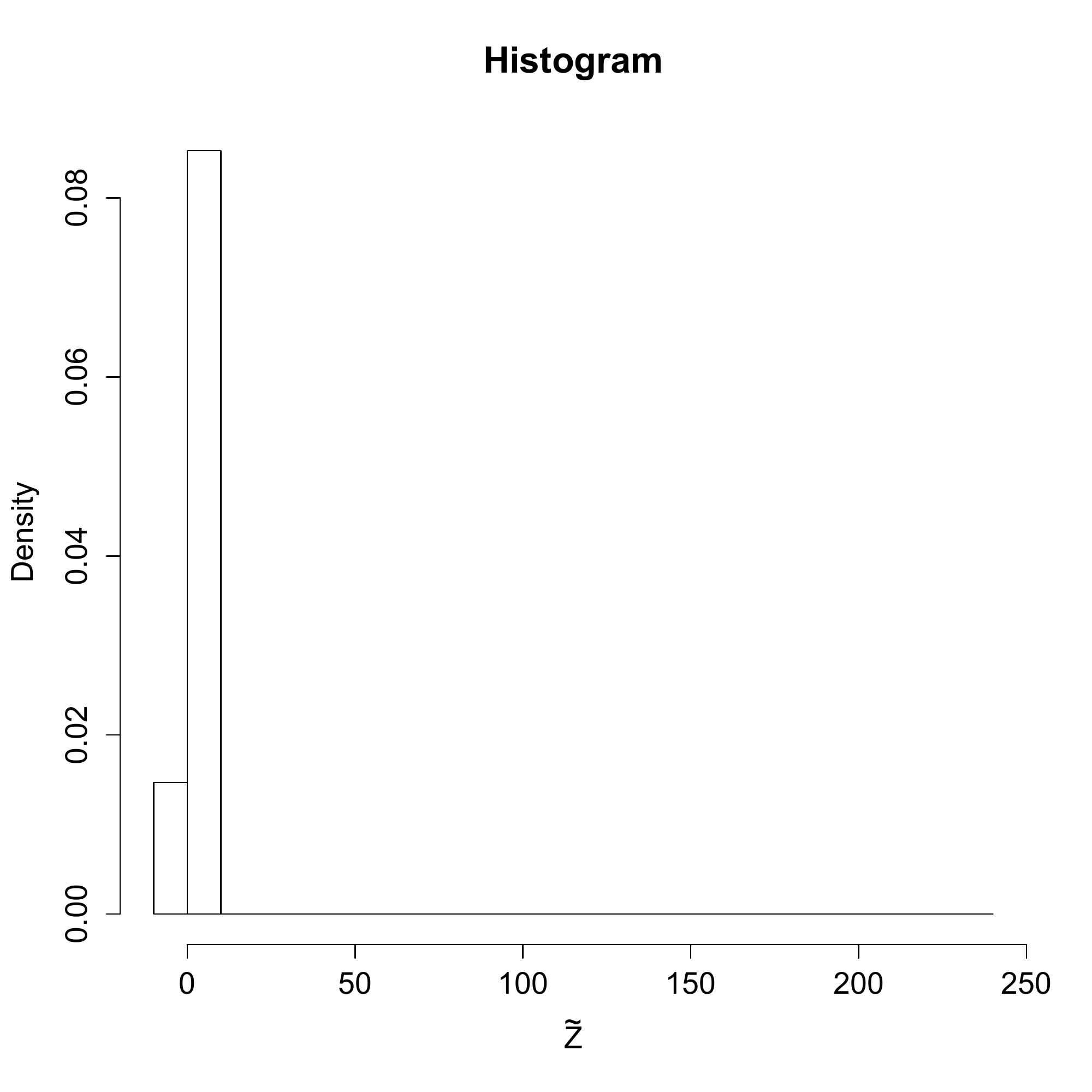}
\caption{Histogram of Monte Carlo sample of $\tilde{Z}$ when $\log a$ has a Gaussian covariance.}
\label{fig:ex2hist}
\end{figure}

\bibliographystyle{apalike}
\bibliography{pde}

\newpage

\appendix

\section{Proof of the Theorems}\label{secproof}
In this section, we provide technical proofs of Theorem \ref{thmfek} and Theorem \ref{propquadr}. Throughout the proof we will use $\kappa$ as a generic notation to denote large and not-so-important constants whose value may vary from place to place. Similarly, we use $\varepsilon$ as a generic notation for small positive constants.  
\begin{proof}[Proof of Theorem~\ref{thmfek}]
Using C\'ea's lemma (Theorem 2.17 of \cite{knabner2003numerical}), the convergence rate of finite element method can be bounded according to the regularity property of $u$.
\begin{equation}\label{cea}
\|u-u_n^{(k)}\|_{H^1(U)}\leq (\frac{a_{\max}}{a_{\min}})^{1/2}\inf_{v\in V_n^{(k)}}\|u-v\|_{H^1(U)}.
\end{equation}
Furthermore, if  $u\in H^{k+1}(U)$, standard interpolation result (See Theorem 3.29 of\\ \cite{knabner2003numerical}) gives an upper bound of the right-hand side of the inequality \eqref{cea}
\begin{equation}\label{interpolation}
\inf_{v\in V_n^{(k)}}\|u-v\|_{H^1(U)}=O \Big(2^{-kn}\|u\|_{H^{k+1}(U)}\Big).
\end{equation}
According to \eqref{cea} and \eqref{interpolation}, it is sufficient to derive an upper bound of $\|u\|_{H^{k+1}(U)}$, which is given in the following proposition.
\begin{proposition}\label{higherorderregularity}
	Under the setting of Theorem~\ref{thmfek}, we have

\begin{equation*}
\|u\|_{H^{k+1}(U)}\leq  \kappa\frac{\max(\|a\|_{C^k(\bar U)},1)^{\frac{k^2}{2}+\frac{9}{2}k-1}} {\min(a_{\min},1)^{\frac{k^2}{2}+\frac{7}{2}k}}\Big(\|f\|_{H^{k-1}(U)}+\|u\|_{L^2(U)}\Big).
\end{equation*}
\end{proposition}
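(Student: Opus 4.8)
The plan is to prove Proposition~\ref{higherorderregularity} by induction on $k$, first rewriting \eqref{eq} as a Poisson equation so that \emph{all} dependence on $a$ becomes explicit, and then tracking constants through the induction. Expanding the divergence, $\nabla\cdot(a\nabla u)=\nabla a\cdot\nabla u+a\Delta u$, so the weak solution of \eqref{weakPDE} satisfies
\begin{equation*}
\Delta u=g,\qquad g:=-\frac{1}{a}\big(f+\nabla a\cdot\nabla u\big),\qquad u|_{\partial U}=0.
\end{equation*}
Since $\partial U$ is $(k+1)$-times differentiable, classical $L^2$-regularity for the Laplacian on a smooth domain (see, e.g., \cite{knabner2003numerical}) supplies a constant $C_{U,k}$ depending only on $U$ and $k$ — crucially \emph{not} on $a$ — such that every $w\in H^1_0(U)$ with $\Delta w\in H^{k-1}(U)$ obeys $\|w\|_{H^{k+1}(U)}\le C_{U,k}\big(\|\Delta w\|_{H^{k-1}(U)}+\|w\|_{L^2(U)}\big)$. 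Thus the whole problem reduces to bounding $\|g\|_{H^{k-1}(U)}$, which now carries all the $a$-dependence.

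To estimate $\|g\|_{H^{k-1}(U)}$ I would use two elementary inequalities with explicit constants: the Leibniz bound $\|\varphi\psi\|_{H^{m}(U)}\le C_m\|\varphi\|_{C^{m}(\bar U)}\|\psi\|_{H^{m}(U)}$, and, via Fa\`{a} di Bruno applied to $t\mapsto 1/t$, the bound $\|1/a\|_{C^{m}(\bar U)}\le C_m\,\max(\|a\|_{C^{m}(\bar U)},1)^{m}\big/\min(a_{\min},1)^{m+1}$. Together with $\|\nabla a\cdot\nabla u\|_{H^{k-1}(U)}\le C\|a\|_{C^{k}(\bar U)}\|u\|_{H^{k}(U)}$ these give
\begin{equation*}
\|g\|_{H^{k-1}(U)}\le\|1/a\|_{C^{k-1}(\bar U)}\Big(\|f\|_{H^{k-1}(U)}+C\|a\|_{C^{k}(\bar U)}\|u\|_{H^{k}(U)}\Big)\le\kappa\,\frac{\max(\|a\|_{C^{k}(\bar U)},1)^{k}}{\min(a_{\min},1)^{k}}\Big(\|f\|_{H^{k-1}(U)}+\|u\|_{H^{k}(U)}\Big).
\end{equation*}
Note that merely asserting $g\in H^{k-1}$ already requires $u\in H^{k}$, i.e. the conclusion of the proposition at level $k-1$; this is what makes the induction natural.

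The induction then runs as follows. In the base case $k=1$ one has $\|g\|_{L^2}\le a_{\min}^{-1}(\|f\|_{L^2}+\|a\|_{C^1(\bar U)}\|u\|_{H^1})$, and Lemma~\ref{h1solution} eliminates $\|u\|_{H^1}$ via $\|u\|_{H^1(U)}\le\kappa\|f\|_{L^2(U)}/a_{\min}$, yielding $\|u\|_{H^2(U)}$ with exponents $(1,2)$ on $\big(\max(\|a\|_{C^1},1),\min(a_{\min},1)\big)$ — well inside the claimed $(4,4)=\big(\tfrac12+\tfrac92-1,\ \tfrac12+\tfrac72\big)$. For the inductive step, apply the proposition with $k-1$ in place of $k$ to the same $u$ (the hypotheses hold because $\partial U\in C^{k+1}\subset C^{k}$, $a\in C^{k}\subset C^{k-1}$, $f\in H^{k-1}\subset H^{k-2}$) to get $\|u\|_{H^{k}(U)}\le\kappa\frac{\max(\|a\|_{C^{k-1}},1)^{\alpha_{k-1}}}{\min(a_{\min},1)^{\beta_{k-1}}}(\|f\|_{H^{k-2}}+\|u\|_{L^2})$ with $\alpha_{k-1}=\tfrac{(k-1)^2}{2}+\tfrac92(k-1)-1$, $\beta_{k-1}=\tfrac{(k-1)^2}{2}+\tfrac72(k-1)$; then substitute into the bound for $\|g\|_{H^{k-1}}$ and into the Poisson estimate. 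The exponents of $\max(\|a\|_{C^{k}},1)$ add to $k+\alpha_{k-1}=\tfrac{k^2}{2}+\tfrac92k-5\le\tfrac{k^2}{2}+\tfrac92k-1$ and those of $\min(a_{\min},1)$ to $k+\beta_{k-1}=\tfrac{k^2}{2}+\tfrac72k-3\le\tfrac{k^2}{2}+\tfrac72k$, exactly the exponents in the statement (with slack), while $\|f\|_{H^{k-2}}\le\|f\|_{H^{k-1}}$. The main obstacle is purely bookkeeping: keeping the constants in the Leibniz and Fa\`{a} di Bruno estimates genuinely polynomial in $\|a\|_{C^{k}}$ and $1/a_{\min}$, making sure the Poisson-regularity constant really is $a$-free, and checking that this exponent arithmetic closes — which, as above, it does comfortably.
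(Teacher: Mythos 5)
Your proof is correct, and it takes a genuinely different route from the paper. The paper essentially reproduces the Evans--style argument (difference quotients for interior $H^2$-regularity, boundary flattening, induction on multi-indices, a covering argument) while carrying along explicit powers of $\|a\|_{C^k(\bar U)}$ and $1/a_{\min}$ at every step; this is why the proof of Proposition~\ref{higherorderregularity} spans several lemmas (Lemmas~\ref{h2boundary}, \ref{lemmainterior}, \ref{lemmainteriorhigher}) and is carried out for a general matrix coefficient $A$, since the change of variables that flattens $\partial U$ turns $a$ into a matrix $A^*$. Your argument instead rewrites $-\nabla\cdot(a\nabla u)=f$ as a Poisson equation $\Delta u=g$ with $g=-(f+\nabla a\cdot\nabla u)/a$, so that all $a$-dependence is concentrated in the right-hand side; then a single citation of the constant-coefficient Dirichlet--Laplace regularity estimate (with a constant depending only on $U$ and $k$) does the PDE work, and the dependence on $a$ is recovered by elementary Leibniz and Fa\`{a} di Bruno estimates plus an induction on the Sobolev order. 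Two observations on the trade-off: first, your black-box Poisson estimate is itself proved via the same difference-quotient/boundary-flattening machinery, so the paper's version is the self-contained one; second, your route actually produces strictly smaller exponents, $\alpha_k'=\tfrac{k(k+1)}{2}$ and $\beta_k'=\tfrac{k(k+1)}{2}+1$, compared with the paper's $\tfrac{k^2}{2}+\tfrac92k-1$ and $\tfrac{k^2}{2}+\tfrac72k$, which is more than enough for the downstream use in Theorem~\ref{thmfek}. The one step you should make explicit is the passage from the variational identity $\int_U a\,\nabla u\cdot\nabla v=\int_U f v$ to the Poisson weak form $\int_U \nabla u\cdot\nabla v=-\int_U g v$ for all $v\in H^1_0(U)$: test against $v=\phi/a$ with $\phi\in C^\infty_c(U)$ (legitimate since $a\in C^1(\bar U)$ and $a_{\min}>0$), expand $\nabla(\phi/a)$, and then extend to $v\in H^1_0(U)$ by density using $g\in L^2(U)$. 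This is exactly where the base case of your induction enters (one must already know $u\in H^1$ and $g\in L^2$), which you do note; but without this line the assertion ``so the weak solution satisfies $\Delta u=g$'' is an unjustified jump. Also, the citation for the Poisson regularity estimate should be a PDE text such as~\cite{evans1998partial} rather than~\cite{knabner2003numerical}.
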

Combining  \eqref{interpolation} and Proposition~\ref{higherorderregularity} we have
\begin{equation}\label{bound1}
\inf_{v\in V_n^{(k)}}\|u-v\|_{H^1(U)}\leq 2^{-kn} \kappa\frac{\max(\|a\|_{C^k(\bar U)},1)^{\frac{k^2}{2}+\frac{9}{2}k-\frac{1}{2}}} {\min(a_{\min},1)^{\frac{k^2}{2}+\frac{7}{2}k+\frac{1}{2}}}\Big(\|f\|_{H^{k-1}(U)}+\|u\|_{L^2(U)}\Big).
\end{equation}
According to the Poincar\'e's lemma (Theorem 2.18 of \cite{knabner2003numerical})  
$$
\|u\|_{L^2(U)}\leq \kappa \|u\|_{H^1(U)}.
$$
Thanks to Lemma~\ref{h1solution}, the above display can be further bounded by $$\|u\|_{L^2(U)}\leq \kappa\frac{\|f\|_{L^2(U)}}{a_{\min}}.$$
We complete the proof by combining the above expression and \eqref{bound1}.
\end{proof}


\bigskip

\begin{proof}[Proof of Theorem~\ref{propquadr}]
According to Lemma 3.12 of \cite{knabner2003numerical}, 
\begin{equation}\label{strang}
\|u-\tilde{u}_{n}\|_{H^1(U)}\leq \inf_{v\in V_n^{(k)}}\Big\{(1+\frac{a_{\max}}{a_{\min}})\|u-v\|_{H^1(U)}+\frac{1}{a_{\min}}\sup_{w\in V_n^{(k)}}\frac{|b(v,w)-\tilde{b}(v,w)|}{\|w\|_{H^1(U)}}\Big\}.
\end{equation}
Notice that $\tilde{a}$ is a linear interpolation of $a$ with $O(2^{-n}) $ mesh size, so the difference between $\tilde{a}$ and $a$ is $O(\|a\|_{C^2(\bar U)}2^{-2n})$ and
\begin{eqnarray*}
|b(v,w)-\tilde{b}(v,w)|
&=&|\sum_{K\in \mathcal{T}_n }\int_{K}(\tilde{a}(x)-a(x))\nabla v\cdot\nabla w dx|\\
&\leq& \kappa\|a\|_{C^2(\bar{U})}2^{-2n}\sum_{K\in \mathcal{T}_n}\int_{K}|\nabla v|\cdot |\nabla w| dx.\\
\end{eqnarray*}
Therefore, for all $v\in V_n^{(k)}$, we have
\begin{equation*}
\|u-\tilde{u}_{n}\|_{H^1(U)}\leq \kappa (1+\frac{a_{\max}}{a_{\min}})\|u-v\|_{H^1(U)}+\frac{\|a\|_{C^2(\bar{U})}}{a_{\min}} \|v\|_{H^1(U)}2^{-2n}.
\end{equation*}
Let $v=u_n^{(2)}$. According to Lemma~\ref{h1solution}, Theorem~\ref{thmfek}, and the above display, we complete the proof.
\end{proof}

\bigskip

For the rest of the section, we provide the proof for Proposition~\ref{higherorderregularity}.  Proposition~\ref{higherorderregularity} is similar to Theorem 5 in Chapter 6.3 of \cite{evans1998partial} but we provide explicitly the dependence of constants on $a$ and $f$.

\bigskip

\begin{proof}[Proof of Proposition~\ref{higherorderregularity}]
We prove Proposition~\ref{higherorderregularity} by proving the following result for the weak solution $w\in H^1_0(U)$ to a more general PDE,
\begin{equation}\label{generalPDE2}
\left\{
\begin{array}{rcl}
-\nabla\cdot(A\nabla w)&=&f \mbox{ in } U\\
w&=&0 \mbox{ on } \partial U,
\end{array}
\right.
\end{equation}
where $A(x)=(A_{ij}(x))_{1\leq i,j\leq d}$ is a symmetric positive definite matrix function in the sense that there exist
 $A_{\min}>0$ satisfying 
 \begin{equation}\label{pd}
 \xi^TA(x)\xi\geq A_{\min}|\xi|^2
 \end{equation}
  for all $x\in \bar{U}$ and $\xi\in R^d$.  Assume that $A_{ij}(x)\in C^{k}(\bar U)$ for all $i,j=1,...,d.$
Then, it is sufficient to show that
\begin{equation}\label{eqreg}
\|w\|_{H^{k+1}(U)}\leq \kappa_r(A,k)\Big(\|f\|_{H^{k-1}(U)}+\|w\|_{L^2(U)}\Big),
\end{equation}
where $\kappa_r(A,k)=\kappa\frac{\max(\|A\|_{C^k(\bar U)},1)^{\frac{k^2}{2}+\frac{9}{2}k-1}} {\min(A_{\min},1)^{\frac{k^2}{2}+\frac{7}{2}k}}$, and 
$
\|A\|_{C^{k}(\bar U)}=\max_{1\leq i, j\leq d}\|A_{ij}\|_{C^k(\bar U)}
$.

\medskip

Let  $B^0(0,r)$ denote the open ball $\{x: |x|<r\}$ and $R^d_+=\{x\in R^d:x_d> 0\}$.
We will first prove that if
$
U=B^{0}(0,r)\cap R^d_+
$ and $V=B^0(0,t)\cap R^d_+$, then for all $t$ and $r$ such that 
  and $0<t<r$, 
\begin{equation}\label{boundarylocalbound}
\|w\|_{H^{m+2}(V)}\leq \kappa_{r,t,m+1}\frac{\max(\|A\|_{C^k(\bar U)},1)^{\frac{(m+1)^2}{2}+\frac{9}{2}(m+1)-1}} {\min(A_{\min},1)^{\frac{(m+1)^2}{2}+\frac{7}{2}(m+1)}} \Big(\|f\|_{H^{m}(U)}+\|w\|_{L^2(U)}\Big),
\end{equation}
where $\kappa_{r,t,m+1}$ is a constant depending only on $r$, $t$, and $m+1$.
The following lemma establish \eqref{boundarylocalbound} for $m=0$.
\begin{lemma}[Boundary $H^2$-regularity]\label{h2boundary}
Assume $\partial U$ is twice differentiable and $A(x)$ satisfies \eqref{pd}. Assume that $A_{ij}(x)\in C^{1}(\bar U)$ for all $i,j=1,...,d.$
Suppose furthermore $w\in H_0^1(U)$ is a weak solution to the elliptic PDE with boundary condition \eqref{generalPDE2}.
Then $w\in H^2(U)$ and
$$
\|w\|_{H^2(U)}\leq\kappa \frac{\max(\|A\|_{C^1(\bar U)},1)^4}{\min(A_{\min},1)^4}\Big( \|f\|_{L^2(U)}+\|w\|_{L^2(U)}\Big).
$$
\end{lemma}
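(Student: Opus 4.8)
The plan is to establish the bound by Nirenberg's translation (difference-quotient) method, localized near the boundary and with every constant tracked explicitly in $\|A\|_{C^1(\bar U)}$ and $A_{\min}$. The estimate is local near $\partial U$, so --- after covering $\partial U$ by finitely many charts and straightening each boundary piece with a $C^2$ diffeomorphism --- it suffices to treat the model situation in which $U$ is a half-ball $B^0(0,r)\cap R^d_+$ with flat face $\{x_d=0\}$; the change of variables preserves the divergence structure $-\nabla\cdot(\tilde A\nabla w)=\tilde f$ with $\tilde A$ symmetric and $C^1$ (its $C^1$ norm controlled by $\|A\|_{C^1(\bar U)}$ up to a factor depending only on the finitely many charts), still uniformly elliptic with ellipticity constant comparable to $A_{\min}$, and $\tilde f$ in $L^2$ with $\|\tilde f\|_{L^2}$ controlled by $\|f\|_{L^2}$; combining the resulting local bounds with the standard interior $H^2$ estimate (proved by the same argument with an interior cutoff and difference quotients in all directions) via a partition of unity then gives the global statement. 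On the half-ball I would fix a cutoff $\zeta$ equal to $1$ on $B^0(0,t)\cap R^d_+$ and compactly supported in $B^0(0,r)$, and first record the localized energy estimate obtained by testing the weak form against $\zeta^2 w$: ellipticity and Young's inequality give $\|w\|_{H^1(B^0(0,t)\cap R^d_+)}\le\kappa\,\frac{\max(\|A\|_{C^1(\bar U)},1)}{\min(A_{\min},1)}\big(\|f\|_{L^2(U)}+\|w\|_{L^2(U)}\big)$, which is the quantity feeding all subsequent steps.

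Next I would differentiate in the tangential directions. For $k\in\{1,\dots,d-1\}$ set $D_k^h w(x)=h^{-1}\big(w(x+he_k)-w(x)\big)$ and test $\int_U A\nabla w\cdot\nabla v\,dx=\int_U fv\,dx$ against $v=-D_k^{-h}\big(\zeta^2 D_k^h w\big)$, which is a legitimate test function after localization since tangential shifts preserve the homogeneous Dirichlet data on the flat face while $\zeta$ vanishes near the curved part, and small shifts leave the domain invariant. Using the discrete integration-by-parts identity $\int\big(D_k^{-h}g\big)\psi\,dx=-\int g\,\big(D_k^h\psi\big)\,dx$ and the discrete Leibniz rule, the left-hand side produces the coercive term $\int_U A(\,\cdot+he_k)\,\zeta^2\,\nabla D_k^h w\cdot\nabla D_k^h w\,dx\ge A_{\min}\int_U\zeta^2\,|\nabla D_k^h w|^2\,dx$ plus commutator terms carrying a factor $D_k^h A$ (bounded pointwise by $\|A\|_{C^1(\bar U)}$) and terms in which $\nabla$ falls on $\zeta$; all of these, together with the right-hand side, are estimated by Cauchy--Schwarz and $\|D_k^h g\|_{L^2}\le\|\partial_k g\|_{L^2}$, and the $\int\zeta^2|\nabla D_k^h w|^2$ reappearing on the right is absorbed with Young's inequality. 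Dividing by $A_{\min}$, inserting the $H^1$ bound above, and letting $h\to 0$ shows $\partial_k\partial_j w\in L^2(B^0(0,t)\cap R^d_+)$ for all $k\le d-1$ and all $j$, with $L^2$ norm bounded by a power of $\max(\|A\|_{C^1(\bar U)},1)/\min(A_{\min},1)$.

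The only second derivative still uncontrolled is $\partial_d^2 w$, which I would recover from the equation itself: expanding $-\sum_{i,j}\partial_i\big(A_{ij}\partial_j w\big)=f$ distributionally and isolating the $(d,d)$ term gives, as an identity in $L^2(B^0(0,t)\cap R^d_+)$, $A_{dd}\,\partial_d^2 w=-f-\sum_{i,j}\big(\partial_i A_{ij}\big)\partial_j w-\sum_{(i,j)\ne(d,d)}A_{ij}\,\partial_i\partial_j w$, where each $\partial_i\partial_j w$ with $(i,j)\ne(d,d)$ is a tangential second derivative (using symmetry of weak second derivatives) and hence already estimated; since ellipticity \eqref{pd} applied to $\xi=e_d$ forces $A_{dd}\ge A_{\min}>0$, dividing by $A_{dd}$ bounds $\|\partial_d^2 w\|_{L^2}$ by the already-controlled quantities. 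Collecting the tangential second derivatives, the normal one, and the $H^1$ bound, and reassembling over the finite cover, yields $\|w\|_{H^2(U)}\le\kappa\,\frac{\max(\|A\|_{C^1(\bar U)},1)^{4}}{\min(A_{\min},1)^{4}}\big(\|f\|_{L^2(U)}+\|w\|_{L^2(U)}\big)$. I expect the main obstacle to be the constant bookkeeping --- propagating the powers of $\|A\|_{C^1(\bar U)}$ and $A_{\min}$ through the energy estimate, the difference-quotient absorption step, and the division by $A_{dd}$, and checking that a conservative count does not exceed the fourth power claimed (which matches the $k=1$ case of Proposition~\ref{higherorderregularity}) --- together with the routine but nontrivial care needed to make the localization and boundary flattening uniform over the finite cover. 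The qualitative content is precisely Theorem~5 in Chapter~6.3 of \cite{evans1998partial}.
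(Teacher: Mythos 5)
Your proposal matches the paper's proof essentially step for step: reduce to the half-ball via boundary flattening, establish the localized $H^1$ energy estimate, run tangential difference quotients $v=-D_k^{-h}(\zeta^2 D_k^h w)$ for $k\le d-1$ to control $\partial_k\partial_j w$, recover $\partial_d^2 w$ pointwise from the equation using $A_{dd}\ge A_{\min}$ (the paper invokes the interior $H^2_{\mathrm{loc}}$ result together with the remark after Theorem~1, Chapter~6.3 of Evans, which is the same reason your "identity in $L^2$" is legitimate), and reassemble over a finite cover. One cosmetic note if you decide to carry this out in full: the paper's displayed bound \eqref{H1bound} contains a typo (it repeats $|\nabla w|^2$ on the right where $|w|^2$ is intended), and a careful count along your bookkeeping plan actually yields an exponent of $3$ rather than $4$ on $\max(\|A\|_{C^1(\bar U)},1)/\min(A_{\min},1)$; this is consistent with the stated lemma, since $4$th power is a valid (if slightly loose) upper bound.
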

We establish \eqref{boundarylocalbound} by induction.
Suppose for some $m$
\begin{equation}\label{eqindboundary}
\|w\|_{H^{m+1}(W)}\leq \kappa_{t,s,m}\frac{\max(\|A\|_{C^k(\bar U)},1)^{\frac{m^2}{2}+\frac{9}{2}m-1}} {\min(A_{\min},1)^{\frac{m^2}{2}+\frac{7}{2}m}}(\|f\|_{H^{m-1}(U)}+\|w\|_{L^2(U)}),
\end{equation}
where 
\begin{equation}\label{eqnewW}
W=B^0(0,s)\cap R^d_+, \mbox{ and } s=\frac{t+1}{2}.
\end{equation}
Since $w$ is a weak solution to \eqref{generalPDE2}, it satisfies the integration equation
\begin{equation}\label{weakgeneralPDE}
 \int_D \nabla w(x)^T A(x) \nabla v(x) dx=\int_D f(x) v(x) dx, \mbox{ for all } v\in H^{1}_0(U).
\end{equation}
Let $\alpha=(\alpha_1,...,\alpha_d)$ be a multiple index with such that $\alpha_d=0$ and $|\alpha|=m$. 
 We consider the multiple weak derivative ${\bar w}=D^{\alpha}w$ and investigate the PDE that ${\bar w}$ satisfies. For any ${\bar v} \in C_c^{\infty}(W)$, where $C_c^{\infty}(W)$ is the space of infinitely differentiable functions that have compact support in $W$, we plug $v=(-1)^{|\alpha|}D^{\alpha}{\bar v}$ into \eqref{weakgeneralPDE}. With some calculations, we have
\begin{equation*}
\int_W (\nabla {\bar w}(x))^T A(x)\nabla {\bar v}(x)=\int_W {\bar f}(x){\bar v}(x)dx,
\end{equation*}
where 
\begin{equation}\label{eqnewf}
{\bar f}=D^{\alpha}f-\sum_{\beta\leq \alpha,\beta\neq\alpha}{\alpha\choose\beta} \Big[-\nabla\cdot(D^{\alpha-\beta}A\nabla D^{\beta}w)\Big].
\end{equation}
Consequently, ${\bar w}$ is a weak solution to the PDE
\begin{equation}\label{eqnewPDE}
-\nabla\cdot(A\nabla {\bar w})={\bar f} \quad\mbox{   for $x$ in } W.
\end{equation}
Furthermore, we have the boundary condition ${\bar w}(x)=0$ for  $x\in \partial W\cap\{x_d=0\}$.
By the induction assumption \eqref{eqindboundary} and \eqref{eqnewf}, we have
\begin{equation}\label{eqfnewbound}
\|{\bar f}\|_{L^2(W)}\leq \|f\|_{H^{m}(U)}+\kappa_{t,s,m}\frac{\max(\|A\|_{C^k(\bar U)},1)^{\frac{m^2}{2}+\frac{9}{2}m-1}} {\min(A_{\min},1)^{\frac{m^2}{2}+\frac{7}{2}m}}\|A\|_{C^{m+1}(\bar U)}\Big(\|f\|_{H^{m-1}(U)}+\|w\|_{L^2(U)}\Big).
\end{equation}
According to the definition of ${\bar w}$, we have
\begin{equation}\label{eqnewwbound}
\|{\bar w}\|_{L^2(W)}\leq \|w\|_{H^{m}(W)}.
\end{equation}
 Applying Lemma~\ref{h2boundary} to ${\bar w}$ with \eqref{eqfnewbound} and \eqref{eqnewwbound}, we have
\begin{equation}\label{eqsmallalpha}
\|D^{\alpha}w\|_{H^2(V)}\leq \kappa_{t,s,m}\kappa \frac{\max(\|A\|_{C^1(\bar U)},1)^4}{\min(A_{\min},1)^4}\frac{\max(\|A\|_{C^k(\bar U)},1)^{\frac{m^2}{2}+\frac{9}{2}m-1}} {\min(A_{\min},1)^{\frac{m^2}{2}+\frac{7}{2}m}}\|A\|_{C^{m+1}(\bar U)}\Big(\|f\|_{H^{m}(U)}+\|w\|_{L^2(U)}\Big).
\end{equation}
Because $\alpha$ is an arbitrary multi-index such that $\alpha_d=0$, and $|\alpha|=m$, \eqref{eqsmallalpha} implies that $D^{\beta} w\in L^2(W)$ for any multiple index $\beta$ such that $|\beta|\leq m+2$ and $\beta_d=0,1,2$. We now extend this result to multiple index $\beta$ whose last component is greater than $2$.  Suppose for all $\beta$ such that $|\beta|\leq m+2$ and $\beta_d\leq j$ , we have
\begin{equation}\label{eqboundj}
\|D^{\beta}w\|_{H^2(V)}\leq \kappa_r^{(j)} \Big(\|f\|_{H^{m}(U)}+\|w\|_{L^2(U)}\Big),
\end{equation}
where $\kappa_r^{(j)}$ is a constant depending on $A$, $m$ and $j$ that we are going to determine later. We establish the  relationship 
between $\kappa^{(j)}_r$ and $\kappa^{(j+1)}_r$.
For any $\gamma$ that is a multiple index such that $|\gamma|=m+2$ and $\gamma_d=j+1$, we use \eqref{eqboundj} to develop an upper bound for $\|D^{\gamma} w\|_{H^2(V)}$. In particular, let $\beta=(\gamma_1,..,\gamma_{d-1},j-1)$. According to the remark (ii) after Theorem 1 of Chapter 6.3 in \cite{evans1998partial},
 we have that
\begin{equation}\label{eqnewPDEae}
-\nabla\cdot(A\nabla (D^{\beta}w))=f^{\dagger} \mbox{  in } W \mbox{ a.e}, 
\end{equation}
where
\begin{equation}\label{eqnewf2}
f^{\dagger}=D^{\beta}f-\sum_{\delta\leq \beta,\delta\neq\beta}{\beta\choose\delta} \Big[-\nabla\cdot(D^{\beta-\delta}A\nabla D^{\delta}w)\Big].
\end{equation}
Notice that
\begin{eqnarray*}
-\nabla\cdot(A\nabla (D^{\beta}w))
=-A_{dd}D^{\gamma} w+\mbox{ sum of terms involves at most $j$ times weak derivatives of $w$}\\
\mbox{ with respect to $x_d$ and at most $m+2$ times derivatives in total.}
\end{eqnarray*}
According to  \eqref{eqboundj}, \eqref{eqnewPDEae}, \eqref{eqnewf2}, and the above display, we have
\begin{equation*}
\|D^{\gamma}w\|_{L^2(U)}\leq\kappa\frac{1}{\min(A_{\min},1)}\Big\{\|A\|_{C^{m+1}(\bar U)}\kappa_r^{(j)} \Big(\|f\|_{H^{m}(U)}+\|w\|_{L^2(U)}\Big)+\|f\|_{H^m(U)} \Big\}.
\end{equation*}
Therefore, 
\begin{equation*}
\|D^{\gamma}w\|_{L^2(U)}\leq \kappa_r^{(j+1)} \Big(\|f\|_{H^{m}(U)}+\|w\|_{L^2(U)}\Big),
\end{equation*}
where \begin{equation}\label{iterative}
\kappa_r^{(j+1)}=\kappa_r^{(j)}\frac{\max(\|A\|_{C^{m+1}(\bar U)},1)}{\min(A_{\min},1)}.
\end{equation}
The above expression provides a  relationship for $\kappa_r^{(j+1)}$ and $\kappa_r^{(j)}$.
According to \eqref{eqsmallalpha}, $$\kappa_r^{(2)}=\kappa_{t,s,m}\kappa\frac{\max(\|A\|_{C^1(\bar U)},1)^4}{\min(A_{\min},1)^4}\frac{\max(\|A\|_{C^k(\bar U)},1)^{\frac{m^2}{2}+\frac{9}{2}m-1}} {\min(A_{\min},1)^{\frac{m^2}{2}+\frac{7}{2}m}}\max(\|A\|_{C^{m+1}(\bar U)},1).$$
Using \eqref{iterative} and the above initial value for the iteration, we have
$$
\kappa_r^{(m+2)}=\kappa_{t,s,m}\kappa\frac{\max(\|A\|_{C^1(\bar U)},1)^4}{\min(A_{\min},1)^4}\frac{\max(\|A\|_{C^k(\bar U)},1)^{\frac{m^2}{2}+\frac{9}{2}m-1}} {\min(A_{\min},1)^{\frac{m^2}{2}+\frac{7}{2}m}}\max(\|A\|_{C^{m+1}(\bar U)},1)\Big\{\frac{\max(\|A\|_{C^{m+1}(\bar U)},1)}{\min(A_{\min},1)}\Big\}^m.
$$
Consequently,
\begin{equation*}
\|w\|_{H^{m+2}(V)}\leq \kappa_{t,s,m}\kappa\frac{\max(\|A\|_{C^k(\bar U)},1)^{\frac{m^2}{2}+\frac{11}{2}m+4}} {\min(A_{\min},1)^{\frac{m^2}{2}+\frac{9}{2}m+4}} \Big(\|f\|_{H^{m}(U)}+\|w\|_{L^2(V)}\Big).
\end{equation*}
Using induction, we complete the proof of \eqref{eqreg} for the case where $U$ is a half ball. 

Now we extend the result to the case that $U$ has a $C^{k+1}$ boundary $\partial U$. We first prove the theorem locally for any point $x^0\in \partial U$. Because $\partial U$ is $(k+1)$-time differentiable, with possibly relabeling, the coordinates of $x$ there exist a function $\gamma: R^{d-1}\to R$ and $r>0$ such that, 
$$
B(x^0,r)\cap U = \{x\in B(x^0,r):x_d>\gamma(x_1,...,x_{d-1}) \}.
$$
Let $\Phi=(\Phi_1,...,\Phi_d)^T: R^d\to R^d$ be a function such that
$$
\Phi_i(x)=x_i \mbox{ for } i=1,...,d-1 \mbox{ and } \Phi_d(x)=x_d-\gamma(x_1,...,x_{d-1}).
$$
Let $y=\Phi(x)$ and choose $s>0$ sufficiently small such that $$U^*=B^{0}(0,s)\cap\{y_d>0\}\subset \Phi(U\cap B(x^0,r)).$$ Furthermore, we let $V^*=B^0(0,\frac{s}{2})\cap\{y_d>0\}$ and set 
$$
w^*(y)=w(x)=w(\Phi^{-1}(y)).
$$
With some calculation, we have that $w^*$ is a weak solution to the PDE
$$
-\nabla\cdot\Big(A^*(y)\nabla w^*(y)\Big)=f^*(y),
$$
where 
$
A^*(y)=J(y)A(\Phi^{-1}(y))J^{T}(y)
$
and $J(y)$ is the Jacobian matrix for $\Phi$ with $J_{ij}(y)=\frac{\partial \Phi_i(x)}{\partial{x_j}}|_{x=\Phi^{-1}(y)}$, and $f^*(y)=f(\Phi^{-1}(y))$.
In addition, $w^*\in H^1(U^*)$ and $w^*(y)=0$ for $y\in \partial U^*\cap\{y_d=0\}$. 
It is easy to check $A^*$ is symmetric and $A^*_{ij}\in C^k(\bar U)$ for all $1\leq i,j\leq d$. Furthermore,  according to the definition of $J$ and $\Phi$, all the eigenvalues of $J(y)$ are $1$ and thus $\zeta^T A^*(y)\xi\geq A_{\min}|J^T(y)\xi|^2\geq\varepsilon A_{\min}|\xi|^2$ for all $\xi\in R^d$.
By substituting $U$, $V$, $A$, $f$ with $U^*$, $V^*$, $A^*$ and $f^*$ in \eqref{boundarylocalbound} we have
$$
\|w^*\|_{H^2(V^*)}\leq \kappa_{r}(A,k)\Big(\| w^*\|_{L^2(U^*)}+\|f^*\|_{H^{k-1}(U^*)}\Big).
$$
According to the definitions of $w^*$ and $f^*$, the above display implies
$$
\|w\|_{H^2(\Phi^{-1}(V^*))}\leq  \kappa_{r}(A,k)\Big(\| w\|_{L^2(U)}+\|f\|_{H^{k-1}(U)}\Big).
$$ 
Because $U$ is bounded, $\partial U$ is compact and thus can be covered by finitely many sets $\Phi^{-1}(V^*_1),..,\Phi^{-1}(V^*_K)$ that are constructed similarly as $\Phi^{-1}(V^*)$. We finish the proof by combining the result for points around $\partial U$ and the following Lemma~\ref{lemmainteriorhigher} for interior points. 
\begin{lemma}[Higher order interior regularity]\label{lemmainteriorhigher}
Under the setting of Lemma~\ref{h2boundary}, we assume that $\partial U$ is $C^{k+1}$, $A_{ij}(x)\in C^{k}(U)$ for all $i,j=1,...,d$, and $f\in H^{k-1}(U)$, and that $w\in H^1(U)$ is one of the weak solutions to the PDE \eqref{generalPDE2} without boundary condition. Then, $w\in H_{loc}^{k+1}(U)$. For each open set $V\subsetneqq U$
\begin{equation*}
\|w\|_{H^{k+1}(V)}\leq \kappa_i(A,k)\Big(\|f\|_{H^{k-1}(U)}+\|w\|_{L^2(U)}\Big),
\end{equation*}
where $\kappa_i(A,k)=\frac{\max(\|A\|_{C^{k}(\bar U)},1)^{3k-1}}{\min(A_{\min},1)^{2k}}\kappa$, and $\kappa$ is a constant depending on $V$.
\end{lemma}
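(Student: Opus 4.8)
The plan is to prove the estimate by induction on $k$. The base case $k=1$ is the standard interior $H^2$ estimate, and the inductive step differentiates equation \eqref{generalPDE2} tangentially and reduces to the base case. This parallels the proof of the half-ball bound \eqref{boundarylocalbound} given above, but is strictly simpler: since $V\subsetneqq U$ stays a fixed positive distance from $\partial U$, one works entirely with test functions having compact support in $U$ and takes difference quotients in every coordinate direction, so the extra iteration over the last index that was needed for the half ball is unnecessary here, and no boundary flattening is required.

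\textbf{Base case ($k=1$).} Fix nested open sets $V\subsetneqq V'\subsetneqq U$ and a cutoff $\zeta\in C_c^\infty(V')$ with $\zeta\equiv 1$ on $V$, $0\le\zeta\le1$, and $|\nabla\zeta|$ controlled by the gaps between $V$, $V'$, and $U$. Testing the weak formulation \eqref{weakgeneralPDE} with $v=\zeta^2 w$ and using ellipticity \eqref{pd} together with Young's inequality gives the Caccioppoli bound $\|\nabla w\|_{L^2(V')}\le \kappa\,\max(\|A\|_{C^0(\bar U)},1)\min(A_{\min},1)^{-1}\big(\|f\|_{L^2(U)}+\|w\|_{L^2(U)}\big)$. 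Next, following the difference-quotient argument (Theorem 1 of Chapter 6.3 of \cite{evans1998partial}), test \eqref{weakgeneralPDE} with $v=-D_l^{-h}\big(\zeta^2 D_l^h w\big)$ for each $l=1,\dots,d$, where $D_l^h$ is the $l$-th difference quotient of step $h$; ellipticity \eqref{pd} bounds $A_{\min}\|\zeta\,D_l^h\nabla w\|_{L^2}^2$ from below, while the terms in which the difference quotient lands on $A$ or on $\zeta$, or is shifted onto $f$, are controlled using $\|A\|_{C^1(\bar U)}$, $\|f\|_{L^2(U)}$, and the Caccioppoli bound, and absorbed by Cauchy--Schwarz with a small parameter (which costs a further factor $\min(A_{\min},1)^{-1}$). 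Letting $h\to0$ yields $w\in H^2(V)$ with $\|w\|_{H^2(V)}\le \kappa\,\max(\|A\|_{C^1(\bar U)},1)^2\min(A_{\min},1)^{-2}\big(\|f\|_{L^2(U)}+\|w\|_{L^2(U)}\big)$, which is the claim with $\kappa_i(A,1)$.

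\textbf{Inductive step.} Assume the estimate holds at level $k-1$ on every open $W\subsetneqq U$. Given $V\subsetneqq U$, insert an intermediate set $V\subsetneqq W\subsetneqq U$. For a multi-index $\alpha$ with $|\alpha|=k-1$, the regularity already available at level $k-1$ (namely $w\in H^k_{loc}(U)$) justifies, after a routine approximation of test functions, that $\bar w=D^\alpha w$ is a weak solution on $W$ of $-\nabla\cdot(A\nabla\bar w)=\bar f$ with $\bar f$ given exactly by the formula \eqref{eqnewf}. Expanding the divergence in \eqref{eqnewf} shows $\bar f$ involves $f$ up to order $k-1$, $A$ up to order $k$, and $w$ up to order $k$, so the induction hypothesis on $W$ gives $\|\bar f\|_{L^2(W)}\le \|f\|_{H^{k-1}(U)}+\kappa\,\max(\|A\|_{C^k(\bar U)},1)\,\kappa_i(A,k-1)\big(\|f\|_{H^{k-2}(U)}+\|w\|_{L^2(U)}\big)$, while $\|\bar w\|_{L^2(W)}\le\|w\|_{H^{k-1}(W)}$ is likewise controlled by the induction hypothesis. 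Applying the base case to $\bar w$ on the pair $V\subsetneqq W$ then gives a bound on $\|D^\alpha w\|_{H^2(V)}$; since $\alpha$ with $|\alpha|=k-1$ is arbitrary this yields $w\in H^{k+1}(V)$ with $\|w\|_{H^{k+1}(V)}\le\kappa_i(A,k)\big(\|f\|_{H^{k-1}(U)}+\|w\|_{L^2(U)}\big)$, where $\kappa_i(A,k)=\kappa\,\kappa_i(A,1)\,\max(\|A\|_{C^k(\bar U)},1)\,\kappa_i(A,k-1)$. Unwinding this recursion from $\kappa_i(A,1)=\kappa\,\max(\|A\|_{C^1(\bar U)},1)^2\min(A_{\min},1)^{-2}$, and bounding each $\|A\|_{C^j(\bar U)}\le\|A\|_{C^k(\bar U)}$ for $j\le k$, accumulates $2+3(k-1)=3k-1$ powers of $\max(\|A\|_{C^k(\bar U)},1)$ in the numerator and $2+2(k-1)=2k$ powers of $\min(A_{\min},1)$ in the denominator, matching the stated $\kappa_i(A,k)$; the finitely many cutoff constants from the nested sets are absorbed into the generic $\kappa$ depending on $V$.

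\textbf{Main obstacle.} The one genuinely delicate point is the bootstrap in the inductive step: one cannot substitute $v=(-1)^{|\alpha|}D^\alpha\bar v$ into the weak formulation until it is already known that $D^\alpha w\in H^1_{loc}(U)$, so the differentiation of the PDE must be carried out one order at a time, each level relying on the regularity established at the previous level. Matching the exponents of $\max(\|A\|_{C^k(\bar U)},1)$ and $\min(A_{\min},1)$ to the exact values $3k-1$ and $2k$ also requires careful bookkeeping when absorbing the coercivity constant at each application of the base case, but this is routine arithmetic once the recursion $\kappa_i(A,k)=\kappa\,\kappa_i(A,1)\,\max(\|A\|_{C^k(\bar U)},1)\,\kappa_i(A,k-1)$ is in hand.
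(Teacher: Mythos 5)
Your proposal is correct and follows essentially the same route as the paper: induction on $k$, using the interior $H^2$ estimate (Lemma~\ref{lemmainterior}, i.e., cutoff plus difference quotients in all $d$ coordinate directions) as the base case, and in the inductive step differentiating the weak formulation by $D^\alpha$ with $|\alpha|=k-1$ so that $\bar w = D^\alpha w$ solves $-\nabla\cdot(A\nabla\bar w)=\bar f$ with $\bar f$ as in \eqref{eqnewf}, then applying the base case on a nested pair of domains; the recursion $\kappa_i(A,k)=\kappa\,\kappa_i(A,1)\,\max(\|A\|_{C^k(\bar U)},1)\,\kappa_i(A,k-1)$ you derive is exactly the paper's, and you unwind it to the stated exponents $3k-1$ and $2k$ correctly. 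Your observation that the extra iteration over the normal index and the boundary flattening used for \eqref{boundarylocalbound} are not needed in the interior case accurately reflects why the paper's proof here is shorter than the boundary argument.
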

%
\end{proof}

\section{Proof of supporting lemmas}
In this section, we provide the proofs for lemmas that are necessary for the proof of Proposition~\ref{higherorderregularity}.
We start with a useful lemma showing $w\in H^2_{loc}(U)$ which will be used in the proof of Lemma~\ref{h2boundary}
\begin{lemma}[Interior $H^2$-regularity]\label{lemmainterior}
Under the setting of Lemma~\ref{h2boundary}, we further assume that $A_{ij}(x)\in C^{1}(\bar U)$ for all $i,j=1,...,d,$ and $f\in L^2(U)$,
and that $w\in H^1(U)$ is one of the weak solutions to the PDE \eqref{generalPDE2} without boundary condition.
Then, $w\in H_{loc}^2(U)$. For each open subset $V\subsetneqq U$, there exist $\kappa$ depending on $V$ such that
\begin{equation*}
\|w\|_{H^2(V)}\leq \kappa \frac{\max(\|A\|_{C^1(U)},1)^2}{\min(A_{\min},1)^2}\Big(\|f\|_{L^2(U)}+\|w\|_{L^2(U)}\Big),
\end{equation*}
where we define the norm $\|A\|_{C^1(\bar{U})}=\max_{1\leq i,j\leq d}\|A_{ij}\|_{C^1(\bar U)}$.
\end{lemma}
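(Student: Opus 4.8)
The plan is to reproduce the classical interior $H^2$-estimate (cf.\ Theorem~1 of Chapter~6.3 of \cite{evans1998partial}), obtained via Nirenberg's difference quotient method, while keeping explicit track of how $A_{\min}$ and $\|A\|_{C^1(\bar U)}$ enter the constant. First I would fix open sets $V\subsetneqq W\subsetneqq U$ and a cutoff function $\zeta\in C_c^\infty(W)$ with $0\le\zeta\le1$ and $\zeta\equiv1$ on $V$, and write $D^h_k g(x)=h^{-1}(g(x+he_k)-g(x))$ for the difference quotient in the direction $e_k$, which is admissible for $0<|h|<\mathrm{dist}(W,\partial U)$. Since $w$ is a weak solution without boundary condition, test functions with compact support in $U$ are allowed, which is exactly what the cutoff argument needs. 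Throughout, $\kappa$ denotes a constant depending only on $d$, $V$, $W$, and the $C^1$-norm of $\zeta$.

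The first step is a Caccioppoli-type energy bound. Testing the weak formulation $\int_U (\nabla w)^T A\,\nabla v\,dx=\int_U fv\,dx$ against $v=\zeta^2 w$, expanding $\nabla(\zeta^2w)=\zeta^2\nabla w+2\zeta w\nabla\zeta$, using the coercivity \eqref{pd} on the leading term and Young's inequality with absorbing parameter proportional to $A_{\min}$ on the cross term, one obtains
\begin{equation*}
\|w\|_{H^1(V')}\le\kappa\,\frac{\max(\|A\|_{C^0(\bar U)},1)}{\min(A_{\min},1)}\big(\|f\|_{L^2(U)}+\|w\|_{L^2(U)}\big)
\end{equation*}
for any $V'$ compactly contained in $U$; in particular this holds on $W$.

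Next comes the difference-quotient step. For each $k=1,\dots,d$ and small $h$, I would plug $v=-D^{-h}_k(\zeta^2 D^h_k w)$ into the weak formulation. Using the discrete product rule $D^h_k(A\nabla w)=A(\cdot+he_k)\nabla(D^h_k w)+(D^h_k A)\nabla w$ and summation by parts for difference quotients, the equation becomes, schematically,
\begin{equation*}
\int_W (\nabla D^h_k w)^T A(\cdot+he_k)\,\nabla(\zeta^2 D^h_k w)\,dx=-\int_W (D^h_k A)\nabla w\cdot\nabla(\zeta^2 D^h_k w)\,dx+\int_W f\,D^{-h}_k(\zeta^2 D^h_k w)\,dx.
\end{equation*}
Expanding $\nabla(\zeta^2 D^h_k w)$, applying coercivity to $\int_W\zeta^2(\nabla D^h_k w)^T A(\cdot+he_k)\nabla D^h_k w$, bounding $\|D^h_k A\|_{L^\infty}\le\|A\|_{C^1(\bar U)}$, and controlling the last integral by $\|f\|_{L^2(W)}\|\nabla(\zeta^2 D^h_k w)\|_{L^2(W)}$ (using that the $L^2$-norm of a difference quotient is dominated by that of the gradient), repeated uses of Young's inequality — each time with the absorbing parameter chosen proportional to $A_{\min}$ — give a bound uniform in $h$:
\begin{equation*}
\|\zeta\,\nabla D^h_k w\|_{L^2(W)}\le\kappa\,\frac{\max(\|A\|_{C^1(\bar U)},1)}{\min(A_{\min},1)}\big(\|f\|_{L^2(W)}+\|w\|_{H^1(W)}\big).
\end{equation*}
Letting $h\to0$ yields $\partial_k\nabla w\in L^2(V)$ for every $k$, hence $w\in H^2(V)$. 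Substituting the Caccioppoli bound for $\|w\|_{H^1(W)}$ produces the product of two factors $\max(\|A\|_{C^1(\bar U)},1)/\min(A_{\min},1)$, which is exactly the claimed $\max(\|A\|_{C^1(\bar U)},1)^2/\min(A_{\min},1)^2$, while $\|f\|_{L^2(W)}\le\|f\|_{L^2(U)}$ finishes the estimate.

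The structure here is entirely standard; the main obstacle is the bookkeeping of constants. I must check that at every absorption step the loss is at most one power of $\min(A_{\min},1)^{-1}$ and that each coefficient-derivative costs at most one power of $\max(\|A\|_{C^1(\bar U)},1)$, so that the cumulative constant is no worse than stated. A secondary point requiring care is that $f$ is only assumed to lie in $L^2(U)$, so the equation cannot be differentiated directly — this is precisely why the difference-quotient test function is used and why the $f$-term must be handled through $\|D^{-h}_k g\|_{L^2}\le\|\nabla g\|_{L^2}$ rather than by transferring a derivative onto $f$.
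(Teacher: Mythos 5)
Your proposal is correct and follows essentially the same route as the paper: test with $v=-D^{-h}_k(\zeta^2 D^h_k w)$, absorb the coercive term and use Young's inequality with absorbing parameter proportional to $A_{\min}$, which costs one factor of $\max(\|A\|_{C^1},1)/\min(A_{\min},1)$; then substitute the Caccioppoli estimate $\|\nabla w\|_{L^2(W)}\le\kappa\,\max(\|A\|_{C^1},1)\,\min(A_{\min},1)^{-1}(\|f\|_{L^2}+\|w\|_{L^2})$, which costs a second such factor. The only cosmetic difference is that you prove the Caccioppoli bound first and the difference-quotient bound second, while the paper does these two steps in the reverse order before combining them; the power-counting lands on the same $\max(\|A\|_{C^1},1)^2/\min(A_{\min},1)^2$.
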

\begin{proof}[Proof of Lemma~\ref{lemmainterior}]
Let $h$ be a real number  whose absolute value is sufficiently small, we define the difference quotient operator
$$D^{h}_k w(x)=\frac{w(x+h e_k)-w(x)}{h},$$
where $e_k$ is the $k$th unit vector in $R^d$.
According to Theorem 3 in Chapter 5.8 of \cite{evans1998partial}, if there exist a positive constant $\kappa$ such that $\|D^{h}_k w\|_{L^2(U)}\leq \kappa$ for all  $h$, then $\frac{\partial w}{\partial x_k}\in L^2(U)$ and $\|\frac{\partial w}{\partial x_k}\|_{L^2(U)}\leq \kappa$. We use this theorem and seek for an upper bound of
\begin{equation}\label{DhDL2}
\int_V |U^{h}_k \nabla w|^2 dx,
\end{equation}
for $k=1,...,d$ for the rest of the proof.

We derive a bound of \eqref{DhDL2} by plugging an appropriate $v$ in \eqref{weakgeneralPDE}.
Let $W$ be an open set such that $V\subsetneqq W\subsetneqq U$. We select a smooth function $\zeta$ such that
\begin{equation*}
\zeta=1 \mbox{ on } V, \qquad \zeta=0 \mbox{ on } W^c,\qquad \mbox{ and } 0\leq\zeta\leq 1.
\end{equation*}
We plug
\begin{equation*}
v=-D^{-h}_k(\zeta^2 D^{h}_k w)
\end{equation*}
into \eqref{weakgeneralPDE}, and have
\begin{equation}\label{eqAB}
-\int_D \nabla w^T A\nabla[ D^{-h}_k(\zeta^2 D^{h}_k w)] dx=- \int_D f D^{-h}_k(\zeta^2 D^{h}_k w)dx.
\end{equation}
We give a lower bound of the left-hand side of \eqref{eqAB} and an upper bound of the right-hand.
We use two basic formulas that are similar to integration by part and derivative of product respectively. For any functions $w_1,w_2\in L^2(U)$, such that $w_2(x)=0$ if  $dist(x,\partial U)<h$, we have
$$
\int_Dw_1D_k^{-h}w_2dx=-\int_D D^{h}_kw_1  w_2dx \mbox{ and } D^{h}_k(w_1w_2)=w_1^{h}D^{h}_k w_2+w_2D^{h}_k w_1,
$$
where we define $w_1^{h}(x)=w_1(x+h e_k)$. Similarly, we define the matrix function $A^h=A(x+h e_k)$.
Applying the above formulas to the left hand side of \eqref{eqAB}, we have
\begin{eqnarray*}
&&-\int_D \nabla w^T A\nabla[ D^{-h}_k(\zeta D^{h}_k w)] dx\\
&=&\int_D D_k^{h}(\nabla w^T A)\nabla(\zeta^2 D^{h}_k w)dx\\
&=&\int_D D_k^{h}(\nabla w^T) A^{h}\nabla(\zeta^2 D^{h}_k w)+
\nabla w^TD_k^{h} A\nabla(\zeta^2 D^{h}_k w)dx
\\
&=&\underbrace{\int_D \zeta^2 D^{h}_k \nabla w^T A^{h} D^{h}_k \nabla w dx}_{\mbox{$J_1$}}
+\underbrace{\int_D 2\zeta(D^{h}_k \nabla w^T A^{h}\nabla\zeta) D^{h}_k w+ 2\zeta(\nabla w^TD_k^{h} A\nabla\zeta)D^{h}_k w+\zeta^2 \nabla w^TD_k^{h} A D^{h}_k \nabla wdx}_{\mbox{$J_2$}}.
\end{eqnarray*}
 $J_1$ in the above expression has a lower bound
\begin{equation*}
J_1\geq A_{\min}\int_D \zeta^2  |D^{h}_k \nabla w|^2 dx
\end{equation*}
due to the positively definitiveness of $A(x)$. 
$|J_2|$ is bounded above by
\begin{equation}\label{A2bound1}
|J_2|\leq \kappa\|A\|_{C^1(\bar{U})}\Big( \int_D\zeta |D_k^{h}\nabla w||D^{h}_k w|+\zeta |\nabla w||D^{h}_k w|+\zeta |\nabla w||D^{h}_k\nabla w|dx\Big).
\end{equation}
The expression \eqref{A2bound1}
 can be further bounded by
\begin{equation}\label{boundA2}
|J_2|\leq\frac{A_{\min}}{2}\int_D \zeta^2 |D^{h}_k\nabla w|^2 dx+\kappa \|A\|_{C^1(\bar{U})}\times(1+\frac{\|A\|_{C^1(\bar{U})}}{A_{\min}})\int_W |\nabla w|^2+|D^{h}_k w|^2 dx.
\end{equation}
thanks to Cauchy-Schwarz inequality.
According to Theorem 3 in Chapter 5.8 of \cite{evans1998partial}, 
\begin{equation}\label{quotientbound}
\int_W|D^{h}_k w|^2 dx\leq \kappa\int_W |\nabla w|^2 dx.
\end{equation}
Therefore, \eqref{boundA2} is bounded above by
\begin{equation}\label{A2bound2}
|J_2|\leq \frac{A_{\min}}{2}\int_D \zeta^2 |D^{h}_k\nabla w|^2 dx+\kappa^2 \|A\|_{C^1(\bar{U})}\times(1+\frac{\|A\|_{C^1(\bar{U})}}{A_{\min}})\int_W |\nabla w|^2 dx.
\end{equation}
Combining \eqref{A2bound1} and \eqref{A2bound2}, we have
\begin{equation}\label{LHSlower}
\mbox{LHS of }\eqref{eqAB}=J_1+J_2\geq J_1-|J_2|\geq  \frac{A_{\min}}{2}\int_D \zeta^2 |D^{h}_k\nabla w|^2 dx-\kappa^2 \|A\|_{C^1(\bar{U})}\times(1+\frac{\|A\|_{C^1(\bar{U})}}{A_{\min}})\int_W |\nabla w|^2dx.
\end{equation}
We proceed to an upper bound of the right hand side of \eqref{eqAB}.
According to \eqref{quotientbound}, we have
\begin{eqnarray}
&&\int_D |D^{-h}_k(\zeta^2 D^{h}_k w)|^2dx\notag\\
&\leq& \kappa \int_D |\nabla(\zeta^2 D^{h}_k w)|^2 dx\notag\\
&\leq& \kappa \int_W 4|D^{h}_k w|^2|\nabla\zeta|^2\zeta^2+\zeta^2 |D^{h}_k \nabla w|^2 dx\notag\\
&\leq& \kappa^3\int_W |\nabla w|^2 +\zeta^2|D^{h}_k \nabla w|^2dx.\label{Bbound2}
\end{eqnarray}
Apply Cauchy's inequality to the right-hand side of \eqref{eqAB}, we have
\begin{equation}\label{Bbound1}
\mbox{RHS of \eqref{eqAB}}\leq \int_D |f||D^{-h}_k(\zeta^2 D^{h}_k w)|dx\leq \frac{2\kappa^3}{A_{\min}}\int_D |f|^2dx+\frac{A_{\min}}{4\kappa^3}\int_D |D^{-h}_k(\zeta^2 D^{h}_k w)|^2dx.
\end{equation}
We combine \eqref{Bbound2} and \eqref{Bbound1},
\begin{equation}\label{Bbound3}
\mbox{RHS of \eqref{eqAB}}\leq \frac{A_{\min}}{4}\int_W \zeta^2|D^{h}_k \nabla w|^2 dx 
+\frac{A_{\min}}{4}\int_W |\nabla w|^2 dx+\frac{2\kappa^3}{A_{\min}}\int_W |f|^2 dx.
\end{equation}
Combining \eqref{LHSlower} and \eqref{Bbound3}, we have
\begin{equation}\label{H2toH1bound}
\int_D \zeta^2|D^{h}_k \nabla w|^2 dx\leq  \frac{8\kappa^3}{A_{\min}^2}\int_W |f|^2 dx
+\Big[1+4\kappa^2\|A\|_{C^1(\bar{U})}\frac{\|A\|_{C^1(\bar{U})}+A_{\min}}{A_{\min}^2}\Big]\int_W|\nabla w|^2 dx.
\end{equation}
Therefore,
\begin{equation}\label{H2toH1bound2}
\int_D \zeta^2|D^{h}_k \nabla w|^2 dx\leq \kappa \frac{\max(\|A\|_{C^1(\bar{U})},1)^2}{\min(A_{\min},1)^2}\Big(\int_W|f|^2 dx + \int_W |\nabla w|^2\Big).
\end{equation}
Now we give an upper bound of $\int_D |\nabla w|$ by taking $v={\tilde \zeta}^2 w$ in \eqref{weakgeneralPDE}, where we choose ${\tilde \zeta}$ to be a smooth function such that ${\tilde \zeta}=1$ on $W$ and ${\tilde \zeta}=0$ on $U^c$. Using similar arguments as that for \eqref{H2toH1bound2}, we have
\begin{equation}\label{H1bound}
\int_W|\nabla w|^2 dx\leq \kappa \frac{\max(\|A\|_{C^1(\bar{U})},1)^2}{\min(A_{\min},1)^2}\Big(\int_W|f|^2 dx + \int_W |\nabla w|^2\Big).
\end{equation}
\eqref{H2toH1bound2} and \eqref{H1bound} together give
\begin{equation}\label{boundk}
\int_D\zeta^2|D^{h}_k\nabla w|^2 dx\leq \kappa \frac{\max(\|A\|_{C^1(\bar{U})},1)^4}{\min(A_{\min},1)^4} \int_D |f|^2 +| w|^2 dx.
\end{equation}
We complete our proof by combining \eqref{boundk} for all $k=1,...,d.$
\end{proof}

\begin{proof}[Proof of Lemma~\ref{h2boundary}]
We first consider a  special case when $U$ is a half ball
$$
U=B^{0}(0,1)\cap {R}^d_{+}.
$$
Let $V= B^{0}(0,\frac{1}{2})\cap {R}^d_+$, and select a smooth function $\zeta$ such that
$$
\zeta=1 \mbox{ on } B(0,\frac{1}{2}),  \zeta=0 \mbox{ on } B(0,1)^c, \mbox{ and } 0\leq\zeta\leq1.
$$
For $k=1,...,d-1$, we plug 
$$
v=-D^{-h}_k(\zeta^2D^{h}_k w)
$$
into \eqref{weakgeneralPDE}.
Using the same arguments for deriving \eqref{H2toH1bound} as in the proof for Lemma~\ref{lemmainterior}, we obtain that
\begin{equation*} 
\int_V |D^{h}_{k}\nabla w|^2 dx\leq \kappa  \frac{\max(\|A\|_{C^1(\bar{U})},1)^2}{\min(A_{\min},1)^2} \int_W |f|^2 +|\nabla w|^2 dx.
\end{equation*}
The above display holds for arbitrary $h$, so we have
\begin{equation}\label{h2bound1}
\sum_{i,j=1,i+j<2d}^d\int_V |\frac{\partial^2 w}{\partial x_i\partial x_j}|^2 dx\leq\kappa \frac{\max(\|A\|_{C^1(\bar{U})},1)^2}{\min(A_{\min},1)^2} \int_W |f|^2 +|\nabla w|^2 dx.
\end{equation}
We proceed to an upper bound for 
$$
\int_V |\frac{\partial^2 w}{\partial x_d \partial x_d }|^2dx.
$$
According to the remark (ii) after Theorem 1 in Chapter 6.3 of \cite{evans1998partial}, with the interior regularity obtained by Lemma~\ref{lemmainterior}, $w$ solves  \eqref{generalPDE2} almost everywhere in $U$. Consequently,
\begin{equation*}
A_{dd}\frac{\partial^2 w}{\partial x_d\partial x_d}=-\sum_{i,j=1,i+j<2d}^d{A_{ij}\frac{\partial^2 w}{\partial x_i\partial x_j}}-\sum_{i,j=1}^d  \frac{\partial A_{ij}}{\partial x_j}\frac{\partial w}{\partial x_i}-f\mbox{ a.e. }
\end{equation*}
Note that $A_{dd}\geq A_{\min}$, so the above display implies that
\begin{equation*}
|\frac{\partial^2 w}{\partial x_d\partial x_d}|\leq \kappa \frac{\|A\|_{C^1(\bar U)}}{A_{\min}}\Big(\sum_{i,j=1,i+j<2d}^d |\frac{\partial^2 w}{\partial x_i\partial x_j}|+|\nabla w|+|f|\Big).
\end{equation*}
Combining the above display with \eqref{h2bound1}, we have
\begin{equation*}
\|w\|_{H^2(V)}\leq \kappa \frac{\max(\|A\|_{C^1(\bar{U})},1)^{2}}{\min(A_{\min},1)^{2}}\Big(\||\nabla w|\|_{L^2(U)}+\|f\|_{L^2(U)}\Big).
\end{equation*}
According to \eqref{H1bound}, the above display implies
\begin{equation*} 
\|w\|_{H^2(V)}\leq \frac{\max(\|A\|_{C^1(\bar{U})},1)^{4}}{\min(A_{\min},1)^{4}}\Big(\| w\|_{L^2(U)}+\|f\|_{L^2(U)}\Big).
\end{equation*}
Similar to the proof for Proposition~\ref{higherorderregularity}, this result can be extended  to the case where $U$ has a twice differentiable boundary. We omit the details.
\end{proof}

\begin{proof}[Proof of Lemma~\ref{lemmainteriorhigher}]
We use induction to prove  Lemma~\ref{lemmainteriorhigher}. When $k=1$, Lemma~\ref{lemmainterior} gives
$$
\|w\|_{H^2(V)}\leq \kappa_i(A,1)\Big(\|f\|_{L^2(U)}+\|w\|_{L^2(U)}\Big).
$$
Suppose for $k=1,...,m$, Lemma~\ref{lemmainteriorhigher} holds. We intend to prove that for $k=m+1$,
\begin{equation*} 
\|w\|_{H^{m+2}}(V)\leq\kappa_i(A,m+1)\Big(\|f\|_{H^{m}(U)}+\|w\|_{L^2(U)}\Big).
\end{equation*}
By induction assumption, we have $w\in H^{m+1}_{loc}(U)$ and for any $W$ such that $V\subsetneq W\subsetneq U$
\begin{equation}\label{eqinductionassump}
\|w\|_{H^{m+1}(W)}\leq \kappa_i(A,m)\Big(\|f\|_{H^{m-1}(U)}+\|w\|_{L^2(U)}\Big).
\end{equation}
Denote by $\alpha=(\alpha_1,..,\alpha_d)^T$ a multiple index with $|\alpha|=\alpha_1+...+\alpha_d=m$.
With similar arguments as for 
\eqref{eqnewPDE}, we have that ${\bar w}=D^{\alpha} w$ is a weak solution to the PDE \eqref{eqnewPDE} without boundary condition.
Similar to the derivation for \eqref{eqsmallalpha}, $w\in H^{m+2}(V)$ and
\begin{equation*} 
\|w\|_{H^{m+2}(V)}\leq \kappa_i(A,1)\kappa_i(A,m)\max(\|A\|_{C^{m+1}(\bar U)},1)\Big(\|f\|_{H^{m}(U)}+\|w\|_{L^2(U)}\Big).
\end{equation*}
We complete the proof by induction.
\end{proof}

\section{Triangularization}\label{sectri}
The triangularization $\mathcal{T}_n$ is a partition of $U$ into triangles parametrized with the mesh size $\max_{K\in \mathcal{T}_h}\mbox{diam}(K)=O(2^{-n})$, and satisfies the following properties, 
\begin{itemize}
\item[(1)] $\bar{U}\subset\cup_{K\in \mathcal{T}_n}K$;
\item[(2)] For any $ K\in\mathcal{T}_n$, the vertices of $K$ lie either all in $\bar U$ or all in $U^c$;
\item[(3)] For $K,K'\in\mathcal{T}_n$, $K\neq K'$, $int(K)\cap int(K')=\emptyset$， where $int(K)$ denote the interior of the triangle $K$;
\item[(4)] If $K\neq K'$ but $K\cap K'\neq\emptyset$, then $K\cap K'$ is either a point or a common edge of $K$ and $K'$.
\end{itemize}

\begin{example}\label{extr}
Here we provide an example of $V_n$ and $\mathcal{T}_n$ defined over the region $U=(0,1)^2$. The detailed definition of  $\mathcal{T}_{n}$ and the finite dimensional subspace $V_n$ is given in Appendix~\ref{sectri}. In Figure~\ref{fig:tri}, 
$\mathcal{T}_n$ is the set of triangles that partitions $(0,1)^2$. The shaded area is the support for the basis function $\phi_1$ of the space $V_2$. In particular, $\phi_1$ is a piecewise linear function on each triangle (and is constant if the triangle is outside the support) and $\phi_1(0.25,0.25)=1$, $\phi_1(0.25,0)=\phi_1(0.5,0)=\phi_1(0.5,0.25)=\phi_1(0.25,0.5)=\phi_1(0,0.5)=0.$ Similar basis functions $\phi_2,...,\phi_9$ can be constructed corresponding to the nine inner nodes (circled points in Figure~\ref{fig:tri}).

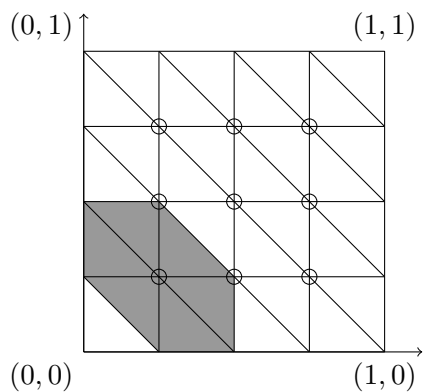
\begin{figure}[!ht]
\centering
\begin{tikzpicture}

  \foreach \x in {0,...,4}
    \foreach \y [count=\yi] in {0,...,3}  
	  {
	  
      \draw (\x,\y)--(\x,\yi)  (\y,\x)--(\yi,\x) ;
      }
   \foreach \x in {0,...,4}
      \foreach \y [count=\yi] in {0,...,4}{
      {
      	  \pgfmathtruncatemacro{\xylabel}{\x + \y*5+1}
            }
      }
      \draw(0,0) node[below left]{$(0,0)$};
      \draw(4,0) node[below ]{$(1,0)$};
      \draw(0,4) node[above left]{$(0,1)$};
      \draw(4,4) node[above ]{$(1,1)$};
  \foreach \x [count=\xi]in {0,...,3}
    \foreach \y [count=\yi] in {0,...,3}  
	  {
		\draw (\x,\yi) -- (\xi,\y);
      }
  \foreach \x [count=\xi]in {1,...,3}
    \foreach \y [count=\yi] in {1,...,3}  
	  {
		\draw (\x,\y) circle(0.1cm);
      }
      
   \draw[->] (-0,0) --(4.5,0);
   \draw[->] (-0,0) --(0,4.5);
	\fill [opacity=0.4]
	{(1,0) -- (2,0) -- (2,1) -- (1,2) -- (0,2) -- (0,1) };
\end{tikzpicture}
\caption{Triangularization $\mathcal{T}_2$ on $(0,1)^2.$ \label{fig:tri}}

\end{figure}
\end{example}

\end{document}